\newtheorem{thm}{Theorem}[section]
\newtheorem{corollary}[thm]{Corollary}
\title[SL(2,R)]{Chaos and integrability in $SL(2,\mathbb R)$-geometry}
\author{ A.V. Bolsinov}\address{Department of Mathematical Sciences,
Loughborough University, Loughborough LE11 3TU, UK,  Moscow State University,  Russia and Moscow Center for Fundamental and Applied Mathematics, Russia.}
\email{A.Bolsinov@lboro.ac.uk}
\author{A.P. Veselov}
\address{Department of Mathematical Sciences,
Loughborough University, Loughborough LE11 3TU, UK,  Moscow State University and Steklov Mathematical Institute, Russia}
\email{A.P.Veselov@lboro.ac.uk}
\author{Y. Ye}
\address{Department of Mathematical Sciences,
Loughborough University, Loughborough LE11 3TU, UK}
\email{Y.Ye@lboro.ac.uk}
\begin{document}

\maketitle

\begin{abstract}

The integrability of the geodesic flow on the three-folds $\mathcal M^3$
admitting $SL(2,\mathbb R)$-geometry in Thurston's sense is investigated.
The main examples are the quotients $\mathcal M^3_\Gamma=\Gamma\backslash PSL(2,\mathbb R)$, where $\Gamma \subset PSL(2,\mathbb R)$ is a cofinite Fuchsian group.
We show that the corresponding phase space $T^*M_\Gamma^3$ contains two open regions with integrable and chaotic behaviour with zero and positive topological entropy respectively.

As a concrete example we consider the case of modular 3-fold with the modular group $\Gamma=PSL({2,\mathbb Z})$, when $\mathcal M^3_\Gamma$ is known to be homeomorphic to the complement of a trefoil knot $\mathcal K$ in 3-sphere.   Ghys proved a remarkable fact that the lifts of the periodic geodesics to the modular surface to $\mathcal M^3_\Gamma$ produce the same isotopy class of knots, which appeared in the chaotic version of the celebrated Lorenz system and were extensively studied by Birman and Williams. We show that in the integrable limit of the geodesic system on $\mathcal M^3_\Gamma$ they are replaced by the simple class of cable knots of trefoil. 
\end{abstract}

\tableofcontents

\section{Introduction}

The general geometrisation programme can be vaguely formulated as follows. Given a manifold, what is the ``best metric'' one can introduce on it? 
To make this precise one has to specify what is assumed from a manifold and what do we mean by the best metric.

One of the major achievements of XIX century mathematics (due to Klein and Poincare, but completed by Koebe in 1907) was the celebrated 
Uniformisation Theorem, claiming that {\it every conformal class of the surface metrics admits complete constant curvature representative.} In particular, on a compact surface we can introduce conformally equivalent metric of positive constant curvature if it is a topological sphere, a flat metric if it is a torus and a negative constant curvature metric if it has genus more than 1.

In dimension three the situation is more complicated. According to  the famous Thurston's {\it Geometrisation Conjecture} \cite{Th}
(now proved by Perelman, see \cite{MT}), any compact orientable 3-manifold can be cut in a special way into pieces, admitting one of the 8 special geometric structures, namely the Euclidean $E^3$, spherical $S^3$ and hyperbolic $\mathbb{H}^3$, the product
type $S^2 \times {\mathbb R}$ and $\mathbb{H}^2 \times {\mathbb R}$ and three
geometries related to the 3D Lie groups: $Nil$, $Sol$ and $\widetilde{SL(2,\mathbb R)}$, where the last group is the universal covering of $SL(2,\mathbb R).$
Corresponding metrics are locally homogeneous, which means that any two points $x$ and $y$ have isometric neighbourhoods $U$ and $V$, see Scott's review \cite{Scott}.


Let $(\mathcal M^3, g)$ be a compact Riemannian 3-fold admitting one of these geometries, and consider the corresponding geodesic flow.
What can we say about its integrability?

In dimension two the answer is well-known: the geodesic flows on round sphere and flat tori are integrable (in any sense), while on surfaces of genus $g>1$ we have the chaotic behaviour with positive entropy.

In this paper we will study the integrability problem for the geodesic flows on 3-folds with $SL(2,\mathbb{R})$-geometry. Instead of the simply-connected group $\widetilde{SL(2,\mathbb R)}$ (which has no finite-dimensional faithful matrix representation) we will use its quotients: the standard matrix Lie group $SL(2,\mathbb{R})$ and $PSL(2,\mathbb{R})=SL(2,\mathbb{R})/\pm I.$

The situation in the other two group cases $Nil$ and $Sol$ were studied earlier by Butler \cite{Butler} and Bolsinov and Taimanov \cite{BT}.
In particular, in \cite{BT} it was shown that in $Sol$-case, on a certain critical level the geodesic flow can be described by a chaotic hyperbolic map of 2D torus, while outside of it we have usual Liouville integrability. As a corollary, this gave the first class of examples of  Liouville integrable (in smooth category) systems with positive topological entropy.

In this paper we show that in the $SL(2, \mathbb R)$-case the chaos spreads out from the critical level to occupy an open region in the phase space.

More precisely, we consider the class of manifolds $\mathcal M^3_\Gamma=\Gamma\backslash PSL(2,\mathbb R)$, where $\Gamma \subset PSL(2,\mathbb R)$ is a finitely generated Fuchsian group acting on the hyperbolic plane $\mathbb{H}^2,$ and $PSL(2,\mathbb R)$ is supplied with a certain class of left-invariant metrics. We will assume that the quotient $\mathcal M^2_\Gamma=\Gamma\backslash \mathbb{H}^2$ is either compact, or at least has a finite area. A particular example is the modular group $\Gamma=PSL(2,\mathbb Z)$, which we discuss in more detail.

Topologically $\mathcal M^3_\Gamma=S\mathcal M^2_\Gamma$ is the unit tangent bundle of the surface $\mathcal M^2_\Gamma$ and carries out a class of natural metrics, coming from the left-invariant metrics on $SL(2,\mathbb{R})$, which are also right $SO(2)$-invariant. They are particular case of the two-parameter family of naturally reductive metrics \cite{HI} on $SL(n,\mathbb R),$ which are left $SL(n,\mathbb R)$-invariant and right $SO(n)$-invariant and determined by the following inner product on the Lie algebra $\mathfrak{g}= sl(n,\mathbb{R}):$
\begin{equation}
\label{metric}
   \langle X,Y \rangle=\alpha(\operatorname{sym} X,\operatorname{sym} Y)+\beta(\operatorname{skew} X,\operatorname{skew} Y), \,\,  \alpha>0>\beta.
\end{equation}
Here $(X,Y):=\operatorname{Tr} XY$ is the standard invariant form on $sl(n,\mathbb{R})$, and $X=\operatorname{skew} X+ \operatorname{sym} X$ is the Cartan decomposition of
$X \in sl(n,\mathbb{R}):$ $$\operatorname{skew} X :=(X-X^{\top})/2 \in so(n), \,\, \operatorname{sym} X:=(X+X^{\top})/2.$$  

They are also known as the {\it generalised Sasaki metrics} \cite{Nagy}, Kaluza-Klein metrics \cite{Mont1995} and appeared in the theory of elastoplasticity \cite{Mielke} (see more details in the next section). In particular, we will see that the Sasaki metric \cite{Sasaki} corresponds to the most convenient case $\alpha=-\beta=2.$

To write down the equations of geodesic flow,  introduce the angular velocity $\Omega:=g^{-1}\dot g \in \mathfrak g$ and the momentum $M=A(\Omega)\in \mathfrak g^*$ determined by the relation $$(\Omega,A(\Omega))=\langle\Omega,\Omega\rangle.$$ Identifying $ \mathfrak g^*$ with $\mathfrak g$ using the Cartan-Killing form, we can write
$$M=\frac{1}{2}(\alpha+\beta)\Omega+\frac{1}{2}(\alpha-\beta)\Omega^{\top}$$ and the corresponding Euler equations \cite{Arnold} as
$$
\dot M=[M,\Omega]=\frac{\beta-\alpha}{2\alpha\beta}[M,M^{\top}].
$$
These equations can be easily integrated and the corresponding geodesics on the group can be found explicitly (see e.g. \cite{HI, Mielke} and Section 3 below).

When $n=2$ introduce the notations by writing the momentum as 
\begin{equation}
\label{abc}
M =\alpha\left(\begin{array}{cc}
a & b \\
c & -a \\
\end{array}
\right)\in \mathfrak g^*\approx \mathfrak g.
\end{equation}
 We have two obvious integrals of the Euler equations: the Hamiltonian  
$$
H=\frac{1}{2}(\Omega,M)=\frac{\alpha}{4\beta}(\beta[4a^2+(b+c)^2]-\alpha (b-c)^2)
$$
and the Casimir function $\Delta=-\frac{1}{\alpha^2}\det M=a^2+bc.$ Their natural extension to the cotangent bundle $T^*G, \, G=PSL(2,\mathbb R)$ by the left shifts (which we will denote by the same letters) give two Poisson commuting integrals of the geodesic flow. As the third, required for the Liouville integrability, integral we can take any non-constant right-invariant function $F$ on $T^*G.$

The situation is different for the quotients $\mathcal M^3_\Gamma=\Gamma\backslash PSL(2,\mathbb R)$, since in general $F$ is not invariant under $\Gamma$ acting from the left.
It turns out that the third Poisson commuting integral, required for  the Liouville integrability, exists only in an open half of the phase space.

More precisely, we prove the following

 \begin{thm} Geodesic flow on $T^*\mathcal M_\Gamma^3$ is integrable in analytic Liouville's sense in the open region of the phase space with $\Delta<0.$ 

 In the region with $\Delta>0$ there are no smooth integrals independent from $H$ and $\Delta$ and the system has positive topological entropy. \end{thm}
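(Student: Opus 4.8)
The plan is to treat the two regions separately, exploiting the conserved right-invariant \emph{spatial momentum} $m:=gMg^{-1}\in\mathfrak g\cong\mathfrak g^*$, which is constant along the flow on $T^*G$ but transforms as $m\mapsto\gamma m\gamma^{-1}$ under the left $\Gamma$-action and therefore does not by itself descend to $T^*\mathcal M^3_\Gamma$. Besides $H$ and the Casimir $\Delta$, the skew part $K:=b-c$ of $M$ is the momentum of the right $SO(2)$-action; it is left-invariant, hence descends, and it Poisson-commutes with $H$ and $\Delta$. These three are not independent, however: using $4a^2+(b+c)^2=4\Delta+K^2$ one finds
\[ H=\alpha\Delta+\frac{\alpha(\beta-\alpha)}{4\beta}K^2, \]
so $\Delta$ and $K$ furnish only two independent commuting integrals and a genuine third one is still required. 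The whole dichotomy is governed by the conjugacy type of $m$: since $\Delta=-\alpha^{-2}\det M=-\alpha^{-2}\det m$, the Casimir is negative exactly when $m$ is elliptic and positive exactly when $m$ is hyperbolic.

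For the region $\Delta<0$ I would construct the missing integral from the \emph{fixed point} of the elliptic one-parameter subgroup generated by $m$. An elliptic $m\in sl(2,\mathbb R)$ has a unique fixed point $z_m\in\mathbb{H}^2$, depending analytically on $m$ and equivariantly under conjugation, $z_{\gamma m\gamma^{-1}}=\gamma\cdot z_m$. Since $m$ is conserved, $z_m$ is conserved, and by equivariance its class $[z_m]\in\mathcal M^2_\Gamma=\Gamma\backslash\mathbb H^2$ is a well-defined $\Gamma$-invariant integral on the open set $\{\Delta<0\}\subset T^*\mathcal M^3_\Gamma$. Composing with any non-constant real-analytic function $\phi$ on $\mathcal M^2_\Gamma$ gives an analytic integral $\Phi:=\phi([z_m])$. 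Because $m$ is invariant under the right $SO(2)$-action, $\Phi$ Poisson-commutes with $K$, hence with $H$ and $\Delta$; and since $\Phi$ genuinely involves $g$ whereas $\Delta,K$ are functions of $M$ alone, $\Phi$ is generically independent of them. This yields three independent commuting analytic integrals $\Delta,K,\Phi$ and proves analytic Liouville integrability on $\{\Delta<0\}$.

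For $\Delta>0$ the element $m$ is hyperbolic and has no interior fixed point, so the construction above breaks down and the flow becomes geodesic-like. On the invariant subset $N_d=\{K=0,\ \Delta=d>0\}$ the body momentum $M\equiv M_0$ is constant (on $K=0$ the Euler dynamics is stationary), $M_0$ is symmetric with real eigenvalues $\pm\alpha\sqrt d$, and the reconstruction integrates to $g(t)=g_0\exp(tM_0/\alpha)$, i.e.\ right multiplication by a hyperbolic one-parameter subgroup conjugate to $\operatorname{diag}(e^{t\sqrt d},e^{-t\sqrt d})$. Under the identification $\mathcal M^3_\Gamma=\Gamma\backslash PSL(2,\mathbb R)=S\mathcal M^2_\Gamma$ this is exactly a constant time-reparametrisation of the hyperbolic geodesic flow on the finite-area surface $\mathcal M^2_\Gamma$, which is Anosov and has positive topological entropy. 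Positive entropy of this invariant subsystem forces positive entropy of the full flow in $\{\Delta>0\}$.

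The non-integrability is the delicate part, since positive entropy alone does not preclude smooth integrals, as the $Sol$-example of \cite{BT} shows. Here I would instead use topological transitivity. Reducing $\{\Delta>0\}$ by the commuting pair $\Delta,K$ produces, on each regular level, the magnetic geodesic flow on $\mathcal M^2_\Gamma$ with field strength proportional to $k$ and energy in the geodesic-like (subcritical) regime; at $k=0$ this is precisely the transitive Anosov geodesic flow identified above, and for small $k$ it remains transitive by structural stability of Anosov flows. A continuous integral of a topologically transitive flow is constant on each energy level, so any smooth $F$ on $\{\Delta>0\}$ must be a function of $\Delta$ and $K$ alone on an open set, hence functionally dependent on $H$ and $\Delta$ by the relation above, contradicting independence. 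The main obstacle is precisely to establish transitivity of the reduced (magnetic) flow throughout the open region $\Delta>0$, in particular for $k\neq0$ and for non-cocompact $\Gamma$ such as the modular group, where the reduction lives on a non-compact twisted cotangent bundle and the transitivity and entropy statements for the cusped hyperbolic surface require extra care.
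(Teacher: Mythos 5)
Your treatment of the integrable half is essentially the paper's own proof in different clothing: the fixed-point map $m\mapsto z_m$ is precisely the classical identification of a sheet of the two-sheeted hyperboloid $\{\Delta=\delta<0\}\subset\mathfrak g^*$ with $\mathbb H^2$, so your $\Phi=\phi([z_m])$ is exactly the paper's automorphic integral $F(m)$ (a non-constant real-analytic $\Gamma$-invariant function on the sheet, extended by homogeneity and right shifts), and your identity $H=\alpha\Delta+\frac{\alpha(\beta-\alpha)}{4\beta}K^2$ with $K=b-c$ agrees with (\ref{H}). Your entropy argument is also sound but weaker than the paper's: restricting to the invariant set $\{K=0,\ \Delta=d>0\}$, where indeed $[M,M^{\top}]=0$ and the flow is right translation by a hyperbolic one-parameter subgroup, i.e.\ a reparametrised geodesic flow of $\mathcal M^2_\Gamma$, gives positivity of $h_{top}$, whereas the paper obtains the level-wise bound (\ref{ineq}) by projecting each level $\mathcal C<1$ onto the magnetic flow on $S\mathcal M^2_\Gamma$ with metric entropy $\sqrt{1-\mathcal C}$ and using the surjectivity property of topological entropy; both versions need the Gurevich--Katok results in the cofinite non-cocompact case, as you note.

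The non-integrability step is where you genuinely diverge from the paper, and it contains a real gap --- though not the one you flag. The transitivity you worry about is not an obstacle: Hedlund's theorem, quoted in the paper as Theorem 4.4, gives transitivity of the magnetic flow for all $\kappa\le 1$, which covers the whole region $\Delta>0$ (equivalently $\mathcal C<1$); no structural-stability argument restricted to small $K$ is needed. The actual gap is the descent to the reduced flow: a smooth integral $F$ of the geodesic flow is not assumed invariant under the circle action generated by $K$ (right multiplication by $SO(2)$, the fibre rotation $\varphi$), so it does not induce a function on the 3-dimensional magnetic level, and transitivity downstairs only forces constancy of $S^1$-invariant integrals. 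The joint level $\{\Delta=d,\ K=k\}$ is 4-dimensional and fibres by circles over the magnetic level; a transitive base flow can perfectly well admit non-constant continuous integrals upstairs if the circle extension fails to be transitive (a trivial extension carries the integral $\cos\varphi$), so your argument requires transitivity or ergodicity of the full flow on the 4-dimensional level --- a frame-flow/circle-extension statement that is plausible here but needs a separate proof, and averaging $F$ over the isometric $SO(2)$-action only controls the averaged integral, not $F$ itself.

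The paper sidesteps this entirely with a different mechanism: it shows there are no invariant 3-dimensional Liouville tori with fixed $H$ and $\Delta=\delta>0$, because on such a level the orbits project to magnetic geodesics on $\mathcal M^2_\Gamma$ with positive Lyapunov exponent $\sqrt{1-\mathcal C}$ (via Pesin's formula), and this exponential divergence of projections is incompatible with the quasi-periodic winding behaviour of orbits on a projected Liouville torus. If you want to keep your reduction strategy, you must either restrict the claim to $S^1$-invariant (e.g.\ $K$-commuting) integrals or prove transitivity of the circle extension on each level; otherwise the Lyapunov-exponent-versus-torus argument of the paper is the more economical route.
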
 
 
This chaos-integrability split has a natural geometric explanation related to the action of the Fuchsian group $\Gamma$ on the co-adjoint orbits $\{\Delta=\delta\}$ of $G$ (see Fig. 2 below). Namely, when $\delta<0$ we have two-sheeted hyperboloid, giving a model of the hyperbolic plane, where the action of $\Gamma$ is discrete, while when $\delta>0$ we have one-sheeted hyperboloid with dense orbits of $\Gamma$ (see more detail in Section 5).
 
 
This kind of split is well known in the classical theory of magnetic geodesic flow on $\mathcal M^2_\Gamma$, see Hedlund \cite{Hedlund-1}, Arnold \cite{Arnold1961}, Paternain et al \cite{BP,Pat}, Taimanov \cite{T2004}. We show that this is not accidental since the magnetic geodesic flow on $\mathcal M^2_\Gamma$ can be considered as a projection of the geodesic flow on $\mathcal M^3_\Gamma.$

More precisely, we show that the natural projection of the geodesics on $SL(2,\mathbb R)$ for our class of metrics to the quotient $SL(2,\mathbb R)/SO(2)$ with constant negative Gaussian curvature $K=-2\alpha^{-1}$ are the curves with constant geodesic curvature $\kappa$ such that
\begin{equation}
\label{curvature}
\mathcal C=\frac{\kappa^2}{K^2}=\frac{(b-c)^2}{4a^2+(b+c)^2}=\frac{\beta H-\alpha\beta \Delta}{\beta H-\alpha^2 \Delta},
\end{equation}
which is an integral of the system. 


It is well-known that in the upper half-plane model these curves are circles if $\mathcal C>1$, and arcs of circles if $\mathcal C<1$ (see Fig. 1). These curves can also be interpreted as magnetic geodesics on hyperbolic plane (see e.g. \cite{Arnold,Hedlund-1}). Note that the condition $\mathcal C>1$ is equivalent to $\Delta<0.$

As a concrete example we consider the case of the modular 3-fold $\mathcal M^3_\Gamma$ with $\Gamma=PSL(2,\mathbb Z)$. It was probably Quillen, who was the first to make an important observation that the quotient
\begin{equation}
\label{quillen}
PSL(2,\mathbb Z)\backslash PSL(2,\mathbb R)=S^3\setminus \mathcal K
\end{equation}
is topologically equivalent to the complement of the trefoil knot $\mathcal K$ in 3-sphere (see Milnor \cite{Milnor}). 

Ghys used this and Birman-Williams results \cite{BW} to establish a remarkable relation between periodic geodesics on $\mathcal M^2_\Gamma$ and a special class of knots (called modular) realised by periodic orbits in the Lorenz system \cite{Ghys} (see more details in Section 6).

We will see that these geodesics are special case of the periodic geodesics on $\mathcal M^3_\Gamma$ at the ``most chaotic'' level of the integral $\mathcal C=0$ 
and extend this link to the integrable region of the geodesic flow on $\mathcal M^3_\Gamma.$ 

 \begin{thm} The periodic geodesics on modular 3-fold $\mathcal M_\Gamma^3$ with sufficiently large values of $\mathcal C$  represent trefoil cable knots in $S^3\setminus \mathcal K.$ 
 Any trefoil cable knot of trefoil can be described in such a way. \end{thm}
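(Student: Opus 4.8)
The plan is to present the periodic geodesics with $\mathcal C\gg1$ as torus knots on the boundary of a tubular neighbourhood of a regular Seifert fibre, and then to match their winding data with the cabling parameters. I use throughout that $\mathcal M^3_\Gamma=S\mathcal M^2_\Gamma=S^3\setminus\mathcal K$ carries the Seifert fibration by the right $SO(2)$-orbits over the modular orbifold $\mathcal M^2_\Gamma$, and that a regular fibre of this fibration is isotopic in $S^3$ to the $(2,3)$-torus knot, i.e. to the trefoil $\mathcal K$ itself.

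First I would convert the hypothesis into geometry of the base. By \eqref{curvature} a geodesic projects to a curve of constant geodesic curvature with $\kappa^2/K^2=\mathcal C$, and for $\mathcal C>1$ this is a metric circle in $\mathbb H^2$ whose hyperbolic radius decreases monotonically to $0$ as $\mathcal C\to\infty$. Hence for $\mathcal C$ sufficiently large the projected circle $\gamma_0\subset\mathcal M^2_\Gamma$ is a small embedded circle bounding an embedded disc $D$ disjoint from the cone points and the cusp; this smallness is exactly where the hypothesis enters. Restricting the $SO(2)$-bundle over $D$ yields a solid torus $V\cong D\times S^1$ whose core is a regular fibre, hence a parallel trefoil $\mathcal K'$, and the geodesic lies on the boundary torus $T=\partial V$.

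Next I would read off the knot type. Equip $T$ with the meridian $\mu=\gamma_0$ (bounding $D$ in $V$) and the longitude $\lambda$ given by the fibre. Writing the geodesic in the naturally reductive product form $g(t)=\exp(tP)\,g_0\,\exp(tQ)$ with $P\in sl(2,\mathbb R)$ elliptic and $Q\in so(2)$ (Section 3), the base circle is traced with a fixed period while over one period the fibre coordinate advances by a definite angle $\Delta\phi$; closing up of the three-dimensional geodesic is the commensurability condition $\rho:=\Delta\phi/2\pi=n/m\in\mathbb Q$. The orbit then closes after $m$ turns around $\gamma_0$ and $n$ turns along the fibre, so in $H_1(T)$ it equals $m\mu+n\lambda$: it is an $(n,m)$-curve on the boundary of a tubular neighbourhood of the trefoil $\mathcal K'$, that is, a cable knot of the trefoil. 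Since $\mathcal K'$ is isotopic to $\mathcal K$ in $S^3$, this is a cable of $\mathcal K$.

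For the converse I would show every cable arises. The rotation number $\rho$ is a function of the integrals, and the fibre spin encoded in $Q$ — equivalently the right-invariant momentum that survives as the third integral in the region $\Delta<0$ — can be varied essentially independently of $\mathcal C$; I would prove that the resulting slope map is continuous and monotone, so that it attains every rational in its range. Passing from the fibre framing $\lambda$ to the preferred $0$-framing of $\mathcal K$ introduces a fixed integer shift, namely the self-linking of a regular fibre determined by the Seifert invariants, after which the realised slopes become honest cabling slopes. Genuine cables require the longitudinal winding $n\ge2$ (the values $n\le1$ merely reproduce $\mathcal K$ or the unknot), and checking that the range of $\rho$ covers all coprime $(n,m)$ with $n\ge2$ completes the statement. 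The main obstacle is precisely this quantitative input: one must integrate the geodesic equations explicitly enough to express $\Delta\phi$ as a function of $(\mathcal C,Q)$ — an elliptic-integral-type evaluation — and establish its monotonicity and range, together with the framing bookkeeping, since an error in the self-linking shift would misidentify the cabling slope. By contrast the topological steps — that a small disc lifts to a solid torus with trefoil core, and that an $(n,m)$-curve on its boundary is the corresponding cable — are routine once the Seifert picture is in place.
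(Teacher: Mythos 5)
Your topological skeleton is exactly the paper's: for large $\mathcal C$ the projected circle is small, its preimage in $\mathcal M^3_\Gamma$ is the boundary of a fibred solid torus about a regular Seifert fibre (a trefoil), and a closed geodesic winding $m$ times over the base circle and $n$ times along the fibre is an $(n,m)$-curve there, hence a cable of the trefoil. Your framing bookkeeping is in fact more careful than the paper's, which only records the resulting linking number $6p$ with $\mathcal K$; and your caveat that the disc must avoid the cone points is a real one (a small circle around the order-$2$ or order-$3$ point lifts to a torus around an \emph{exceptional} fibre, which is an unknotted core, so those closed orbits are torus knots rather than trefoil cables) — the paper glosses this too. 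But the quantitative part, which you flag as the main obstacle, is both easier and subtler than you think. Easier: there is no ``elliptic-integral-type evaluation''. By Theorem 3.1 the geodesic is exactly $g(t)=g_0e^{tX_0}e^{tY_0}$, with $e^{tY_0}$ a rotation along the fibre of frequency $\omega_1=\frac{\beta-\alpha}{2\beta}|b-c|$ (read off from \eqref{XYabc}) and $X_0$ elliptic for $\Delta<0$ with frequency $\omega_2=\sqrt{-\Delta}$; the slope is the closed formula \eqref{pq}, so no continuity or monotonicity argument is needed.

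Subtler, and this is a genuine error in your converse: the slope \emph{cannot} be ``varied essentially independently of $\mathcal C$''. The identity $(b-c)^2-\bigl(4a^2+(b+c)^2\bigr)=-4\Delta$ gives $-4\Delta=(b-c)^2\bigl(1-\mathcal C^{-1}\bigr)$, whence
\begin{equation*}
\frac{\omega_1}{\omega_2}=\frac{\beta-\alpha}{2\beta}\,\frac{|b-c|}{\sqrt{-\Delta}}=\frac{\beta-\alpha}{\beta}\bigl(1-\mathcal C^{-1}\bigr)^{-1/2},
\end{equation*}
a function of $\mathcal C$ alone once the metric is fixed: the fibre spin $b-c$ and $\Delta$ are locked together by $\mathcal C$. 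In the regime the theorem lives in ($\mathcal C$ large, which is what guarantees the embedded-torus picture), the slope is pinned in a small window above the constant $\frac{\beta-\alpha}{\beta}>1$, so for a fixed metric your slope map attains only rationals near that constant — nowhere near all coprime pairs. The paper closes exactly this hole by varying the metric parameters $\alpha,\beta$ as well: as $\beta/\alpha$ ranges over $(-\infty,0)$ the constant $\frac{\beta-\alpha}{\beta}$ sweeps $(1,\infty)$, and together with $\mathcal C$ this realises every admissible slope, hence (after your framing shift) every cable of the trefoil. With the extra input ``vary $(\alpha,\beta)$'' replacing your independence claim, the rest of your argument is sound and essentially the paper's.
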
 
 
 Recall that the {\it cable knots} are special satellite knots, which can be described in the following way (see e.g. \cite{Adams}). Take a solid torus with torus knot $K_{p,q}$ on the boundary and tie it up as a non-trivial knot $K.$ When $K$ is the trefoil knot we get the class of knots called the {\it trefoil cables}. 
 
When $\mathcal C$ is large the projection of the corresponding two-dimensional Liouville torus in $T^*\mathcal M^3_\Gamma$ with frequencies $\omega_1=\frac{\beta-\alpha}{2\beta}|b-c|,\,\, \omega_2=\sqrt{-\Delta}$ on $\mathcal M^3_\Gamma$ gives an embedding onto a torus close to the trefoil fibre of the projection $\mathcal M^3_\Gamma \rightarrow \mathcal M^2_\Gamma$. When 
 $$
 \frac{\omega_1}{\omega_2}=\frac{\beta-\alpha}{2\beta}\frac{|b-c|}{\sqrt{-\Delta}}=\frac{p}{q}
 $$
is rational we have a periodic torus filled by $(p,q)$ cable knots of trefoil.

The appearance of the satellite knots in the integrable region might be expected in view of Thurston's classification of all knots as hyperbolic, torus or satellite (see e.g. \cite{Adams}), but in our case these are specifically cable knots of trefoil.  

Note that in the case of principal congruence subgroup $\Gamma_2$, briefly discussed at the end of Section 6, we have simply torus knots, which is probably the most natural class of ``integrable'' knots.
 

%
We finish the paper with the discussion of topological aspects of the problem in relation with known results about topological obstruction to integrability.
In particular, in the case of compact $\mathcal M^2_\Gamma$ of genus $g\geq 2$ we have $\dim H_1(\mathcal M^3_\Gamma,\mathbb R)=2g>3=\dim \mathcal M^3_\Gamma,$ so by the general Taimanov's result \cite{T} there are no analytically integrable geodesic flows on $T^*\mathcal M_\Gamma^3$ with any metric. Our results show that this does not exclude the analytic integrability in a suitable open region of $T^*\mathcal M^3_\Gamma$.

\section{Geometry and topology of $SL(2,\mathbb R)$}

The group $SL(2,\mathbb{R})$ consisting of $2\times 2$ real matrices with determinant 1 has two relatives: its quotient by the centre $PSL(2,\mathbb{R})=SL(2,\mathbb{R})/\{\pm I\}$ and the universal covering $\widetilde{SL(2,\mathbb{R})}.$ 

It acts  by the isometries $z \rightarrow \frac{az+b}{cz+d}$ of the hyperbolic plane $\mathbb{H}^2$ realised as the upper half-plane $\{ z=x+iy, y>0\} $ with negative constant curvature metric $ds^2=\frac{dx^2+dy^2}{y^2}.$ Since $\pm I$ act trivially, the actual hyperbolic isometry group is $PSL(2,\mathbb{R}).$ The stabiliser of a point $z=i$ is the subgroup $PSO(2)\subset PSL(2,\mathbb{R})$, so $PSL(2,\mathbb{R})$ can be naturally identified with the unit tangent bundle of hyperbolic plane $\mathbb{H}^2=PSL(2,\mathbb{R})/PSO(2).$

Explicitly we have the identification
\begin{equation}
\label{UH}
g=\pm\left(\begin{array}{cc}
	a & b \\
	c & d \\
	\end{array}\right) \in PSL(2, \mathbb R) \longrightarrow \left(z=\frac{ai+b}{ci+d}, \, \xi=\frac{i}{(ci+d)^2}\right) \in S\mathbb H^2,
\end{equation}
where $\mathbb H^2$ is realised as the upper half-plane $\{z=x+iy, \, y>0\} $ with the hyperbolic metric
$$
ds^2=\frac{dzd\bar z}{y^2}.
$$
Indeed, it is easy to check that $\xi \in T_z\mathbb H^2$ has the unit norm in this metric.
Denoting the argument of $\xi$ as $\varphi$ we can introduce convenient coordinates $x,y,\varphi$ on $PSL(2, \mathbb R)$ with $(x,y)\in \mathbb H^2, \varphi \in S^1=\mathbb R/2\pi\mathbb Z.$

Thus both $SL(2,\mathbb{R})$ and $PSL(2,\mathbb{R})$ topologically are open solid torus $\mathbb H^2 \times S^1$ with the fundamental group $\mathbb Z.$ Their universal cover $\widetilde{SL(2,\mathbb{R})}$ is a simply-connected 3-dimensional Lie group, which is known to have no faithful matrix representations.

%

In Thurston's approach the corresponding model geometry is considered on the universal cover $\widetilde{SL(2,\mathbb{R})}$ of $SL(2,\mathbb{R})$, but for our purposes it is enough to consider $SL(2,\mathbb{R})$ itself and its quotients by discrete subgroups. 

As for the metrics we can choose any left-invariant metric on the group, but we will choose the special class, which also right-invariant under the subgroup $SO(2).$ We cannot choose bi-invariant metric since it is known to be not positive definite.

There is a two parameter family of metrics (\ref{metric}), which up to a multiple are determined by the quadratic form 
\begin{equation}
\label{metric2}
|\Omega|^2=4(u^2+vw)+k(v-w)^2, \quad k=1-\frac{\beta}{\alpha} >1
\end{equation}
on the Lie algebra
$$\Omega=\left(\begin{array}{cc}
u & v \\
w & -u \\
\end{array}
\right)\in sl(2,\mathbb R).$$
We have chosen the normalisation $\alpha=2$ to make the Gaussian curvature $K=-2/\alpha$ of the quotient  $SL(2,\mathbb{R})/SO(2)$  to be precisely $-1$ (see below). 

The coordinates $u,v,w$ are related to $a,b,c$ of the corresponding momentum (\ref{abc})  by
\begin{equation}
\label{rela}
u=a, \,\, v=\frac{(2-k)b-kc}{2(1-k)}, \,\, w=\frac{(2-k)c-kb}{2(1-k)},
\end{equation}
\begin{equation}
\label{rela2}
a=u, \,\, b=\frac{(2-k)v+kw}{2}, \,\, c=\frac{(2-k)w+kv}{2}.
\end{equation}
In particular, when $k=2$ we have particularly simple relation
$$
\Omega=\frac{1}{2}M^{\top}=\left(\begin{array}{cc}
a & c \\
b & -a \\
\end{array}
\right).
$$

Direct calculation shows that in the coordinates $x,y,\varphi$ this metric has the form
\begin{equation}
\label{metric3}
ds^2=\frac{dx^2+dy^2}{y^2}+(k-1)\left(d\varphi+\frac{dx}{y}\right)^2.
\end{equation}
This means that the projection $PSL(2,\mathbb{R})\rightarrow PSL(2,\mathbb{R})/PSO(2)$ is a Riemannian submersion of $PSL(2,\mathbb{R})$ with metric (\ref{metric2}) onto the upper half-plane with the standard hyperbolic metric $ds^2=\frac{dx^2+dy^2}{y^2}$ of Gaussian curvature $K=-1.$

One can check that when $k=2$ the metric (\ref{metric2}) is precisely the {\it Sasaki metric} \cite{Sasaki} on $S\mathbb H^2,$ so our class of metrics coincides with the class of the generalised Sasaki metrics considered by Nagy \cite{Nagy}.

As it was shown by Nagy, the projection of the corresponding geodesics to $\mathbb H^2$ are the curves of constant geodesic curvature, which are either circles,  or arcs of them lying in the upper half-plane. We are going to make this more explicit in the next two sections using the Euler-Poincare description of the geodesic flow.

%
%
%

\section{Geodesics on $SL(n,\mathbb R)$ with naturally reductive metrics}

Let $G$ be a semi-simple Lie group $G$ with left-invariant metric defined by an inner product $\langle~,~\rangle$ on the Lie algebra $\mathfrak g.$
We identify $\mathfrak g$ with its dual $\mathfrak g^*$ using the Cartan-Killing form $(~,~).$

The Euler-Poincare equations of the corresponding geodesic flow have the following form (see Arnold \cite{Arnold})
\begin{equation}
  \label{euler's eq}
\dot M=[M,\Omega],
\end{equation}
where $\Omega:=g^{-1}\dot g \in \mathfrak g$ and the momentum $M\in \mathfrak g^*=\mathfrak g$ is determined by the relation $(\Omega,M)=\langle\Omega,\Omega\rangle.$

In the case of naturally reductive metrics (\ref{metric}), which are left $G$-invariant and right $K$-invariant on $G=SL(n,\mathbb R)$ with $K=SO(n)$, we have
\begin{equation}
\label{momentum}
2M=\alpha(\Omega+\Omega^{\top})+\beta(\Omega-\Omega^{\top})
           =(\alpha+\beta)\Omega+(\alpha-\beta)\Omega^{\top}.
\end{equation}
We have also that $2M^{\top}=(\alpha+\beta)\Omega^{\top}+(\alpha-\beta)\Omega$ and thus
\begin{equation}
\label{omega}
\Omega=\frac{(\alpha+\beta)M+(\beta-\alpha)M^{\top}}{2\alpha\beta}.
\end{equation}
Substituting this into (\ref{euler's eq}) we have
\begin{equation}
  \label{EEq}
\dot M=\frac{\beta-\alpha}{2\alpha\beta}[M,M^{\top}].
\end{equation}
Note that
$$
\dot M^{\top}=\frac{\beta-\alpha}{2\alpha\beta}[M,M^{\top}]^{\top}
        =\frac{\beta-\alpha}{2\alpha\beta}[M,M^{\top}]
        =\dot M,
$$
which gives the conservation law $\dot M-\dot M^{\top}\equiv 0$, related to $SO(n)$-invariance of the metric.

Thus in terms of $\Omega$ the equations are
\begin{equation}
  \label{EEqomega}
  \dot\Omega=\frac{\alpha-\beta}{2\alpha}[\Omega^{\top}, \Omega]=\frac{k}{2}[\Omega^{\top}, \Omega],
\end{equation}
where as before $k=1-\frac{\beta}{\alpha}.$

These equations can be easily integrated in the following way (see e.g. \cite{HI,Mielke}.

Note first that 
$$
M=\alpha \, \operatorname{sym} \Omega +\beta\,  \operatorname{skew}\Omega=\alpha[\Omega-k \, \operatorname{skew}\Omega], \,\, \operatorname{skew}\Omega=(\Omega-\Omega^{\top})/2
$$
and introduce the matrices
\begin{equation}
\label{XY}
X=\frac{1}{\alpha}M=\Omega-k \, \operatorname{skew}\Omega \in sl(n), \quad Y= k \, \operatorname{skew}\Omega \in so(n)
\end{equation} 
with $X+Y=\Omega.$ In terms of variables (\ref{abc}) we have
\begin{equation}
\label{XYabc}
          X=\left(
            \begin{array}{cc}
              a & b \\
              c & -a \\
            \end{array}
          \right), \,\, 
Y=\frac{\alpha-\beta}{2\beta}\left(
            \begin{array}{cc}
              0 & b-c \\
              c-b & 0 \\
            \end{array}
          \right)=\frac{k}{2(k-1)}\left(
            \begin{array}{cc}
              0 & c-b \\
              b-c & 0 \\
            \end{array}
          \right).
\end{equation} 

\begin{thm} \cite{HI,Mielke}
The geodesic of naturally reductive metric (\ref{metric}) on $SL(n,\mathbb R)$ with $g(0)=g_0, \, \Omega(0)=\Omega_0$ can be given explicitly by
\begin{equation}
\label{geodesics}
 g(t)=g(0)e^{tX_0}e^{tY_0}
\end{equation}
with $X_0,Y_0$ computed from $\Omega_0$ using formula (\ref{XY}).
\end{thm}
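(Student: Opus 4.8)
The plan is to use the reduction already carried out in the excerpt: the geodesic equations have been rewritten in Euler--Poincar\'e form (\ref{EEqomega}) as a quadratic ODE for $\Omega\in sl(n)$, and once $\Omega(t)$ is known the curve $g(t)$ is recovered by integrating the reconstruction equation $\dot g=g\,\Omega$ with $g(0)=g_0$. So I would first integrate (\ref{EEqomega}) in closed form, exploiting the conserved skew part, and then solve $\dot g=g\,\Omega$ by a substitution that produces exactly the product $e^{tX_0}e^{tY_0}$. Because these are analytic ODEs, uniqueness of the solution through $(g_0,\Omega_0)$ guarantees that the curve so obtained is \emph{the} geodesic.

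The decisive step is to linearise (\ref{EEqomega}). The conservation law $\dot M-\dot M^{\top}\equiv 0$, recorded right after (\ref{EEq}) as a consequence of right $SO(n)$-invariance, says that $\operatorname{skew} M=\beta\,\operatorname{skew}\Omega$ is a first integral; hence so is $Y=k\,\operatorname{skew}\Omega$, and along the geodesic $Y(t)\equiv Y_0$. Writing $\Omega^{\top}=\Omega-2\operatorname{skew}\Omega=\Omega-\tfrac{2}{k}Y_0$ and substituting into (\ref{EEqomega}), the term $[\Omega,\Omega]$ drops out and the quadratic flow collapses to the \textbf{linear, constant-coefficient} equation $\dot\Omega=[\Omega,Y_0]$. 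Its solution with $\Omega(0)=\Omega_0$ is the conjugation $\Omega(t)=e^{-tY_0}\Omega_0\,e^{tY_0}$, as one checks by differentiating and using that $Y_0$ commutes with $e^{tY_0}$.

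It then remains to solve $\dot g=g\,\Omega(t)$ with $g(0)=g_0$. Here I would make the substitution $g(t)=h(t)\,e^{tY_0}$: using $\Omega(t)=e^{-tY_0}\Omega_0\,e^{tY_0}$ together with $\Omega_0=X_0+Y_0$ from (\ref{XY}), the factors $e^{\pm tY_0}$ cancel and the equation reduces to $\dot h=h\,X_0$, whence $h(t)=g_0\,e^{tX_0}$. This yields $g(t)=g_0\,e^{tX_0}e^{tY_0}$, the claimed formula; and since $X_0\in sl(n)$ and $Y_0\in so(n)$, the factors $e^{tX_0}$ and $e^{tY_0}$ lie in $SL(n,\mathbb R)$, so the geodesic stays in the group.

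The only genuinely nontrivial point is the linearisation in the second paragraph: a priori (\ref{EEqomega}) is a quadratic flow with no reason to be solvable by exponentials, and it is precisely the freezing of the skew component forced by $SO(n)$-invariance that turns it into a linear equation with constant coefficients. This is the step I would treat most carefully, in particular checking that it is $\operatorname{skew}\Omega$ (not merely $\operatorname{skew} M$) that is conserved and that the surviving bracket is exactly $[\Omega,Y_0]$. Everything else — the two integrations and the cancellations — is routine. One could equally run the argument backwards as a direct verification: compute $g^{-1}\dot g$ for the product $g_0e^{tX_0}e^{tY_0}$, obtain $e^{-tY_0}X_0e^{tY_0}+Y_0$ with the correct value $\Omega_0$ at $t=0$, and confirm by differentiation that it satisfies (\ref{EEqomega}).
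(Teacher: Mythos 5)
Your proposal is correct, but it runs in the opposite direction to the paper's argument. The paper simply \emph{verifies} the product formula: it differentiates $g(t)=g_0e^{tX_0}e^{tY_0}$ using $X_0+Y_0=\Omega_0$, reads off $\Omega(t)=e^{-tY_0}\Omega_0e^{tY_0}$, and checks — using that conjugation by $e^{tY_0}\in SO(n)$ commutes with transposition, so that $e^{-tY_0}Y_0e^{tY_0}=\frac{k}{2}(\Omega-\Omega^{\top})$ — that $\dot\Omega=\frac{k}{2}[\Omega^{\top},\Omega]$ holds; uniqueness of geodesics with given initial data is left implicit. You instead \emph{derive} the formula: you extract from the right $SO(n)$-symmetry the conserved quantity $\operatorname{skew}\Omega$ (equivalently $\operatorname{skew}M=\beta\,\operatorname{skew}\Omega$), use it to collapse the quadratic Euler--Poincar\'e flow to the constant-coefficient linear equation $\dot\Omega=[\Omega,Y_0]$ via $\frac{k}{2}\bigl[\Omega-\frac{2}{k}Y_0,\Omega\bigr]=[\Omega,Y_0]$, solve it by conjugation, and then integrate the reconstruction equation $\dot g=g\Omega$ by the substitution $g=he^{tY_0}$, using that $Y_0$ commutes with its own exponential. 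All of these steps are sound, and there is no circularity in the linearisation: the conservation of $\operatorname{skew}\Omega$ follows from the equation (\ref{EEqomega}) itself, since $[\Omega^{\top},\Omega]$ is a symmetric matrix, so every solution has frozen skew part. What your route buys is an explanation of \emph{why} the answer is a product of two exponentials — the Noether integral of the right $K$-symmetry linearises the flow — together with an explicit existence-and-uniqueness argument; what the paper's route buys is brevity, since verifying a known answer takes two lines of differentiation. Your closing remark correctly identifies that backward verification as exactly the proof the paper gives.
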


\begin{proof}
We can check by direct calculation that the Euler-Poincare equations (\ref{EEqomega}) are satisfied. We have
$$  \dot g=g_0e^{tX_0}X_0e^{tY_0}+g_0e^{tX_0}Y_0e^{tY_0}=g_0e^{tX_0}\Omega_0e^{tY_0},$$
so $\Omega=g^{-1}\dot g=e^{-tY_0}\Omega_0e^{tY_0}.$ Now
$$ \dot \Omega=e^{-tY_0}\Omega_0Y_0e^{tY_0}-e^{-tY_0}Y_0\Omega_0e^{tY_0}=e^{-tY_0}[\Omega_0,Y_0]e^{tY_0}=[e^{-tY_0}\Omega_0e^{tY_0}, e^{-tY_0}Y_0e^{tY_0}].
$$
Since $e^{tY_0} \in SO(n)$ we have $e^{-tY_0}Y_0e^{tY_0}=\frac{k}{2}e^{-tY_0}(\Omega_0-\Omega_0^{\top})e^{tY_0}=\frac{k}{2}(\Omega-\Omega^{\top}).$
Thus we have 
$$\dot \Omega=[\Omega, \frac{k}{2}(\Omega-\Omega^{\top})]=\frac{k}{2}[\Omega^{\top},\Omega],$$
which coincides with (\ref{EEqomega}).
\end{proof}

Consider the symmetric space $X_n=G/K=SL(n,\mathbb R)/SO(n)$ (of type $AI$ in Cartan's classification \cite{Helgason}) and the natural projection  $\pi: G \rightarrow X_n$.  

\begin{corollary}
The projection of the geodesics on $SL(n,\mathbb R)$ with the naturally reductive metrics to the symmetric space $X_n$ have constant geodesic curvature. 
\end{corollary}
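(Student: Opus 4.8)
The plan is to read off everything from the explicit geodesics $g(t)=g(0)e^{tX_0}e^{tY_0}$ of the preceding theorem. The decisive point is that the second factor lies in $K=SO(n)$: indeed $Y_0=k\,\operatorname{skew}\Omega_0\in so(n)=\mathfrak k$, so $e^{tY_0}\in SO(n)$. Since the projection $\pi\colon G\to X_n=G/K$ is insensitive to right multiplication by $K$, the projected curve is
$$
c(t):=\pi(g(t))=g(0)e^{tX_0}K,
$$
i.e. the $\pi$-image of the left-translated one-parameter subgroup $t\mapsto g(0)e^{tX_0}$. By (\ref{XY}) we have $X_0=\operatorname{sym}\Omega_0+(1-k)\operatorname{skew}\Omega_0$, whose skew part is nonzero whenever $k\neq1$; hence $X_0\notin\mathfrak p$ and $c$ is \emph{not} a geodesic of $X_n$ in general. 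The corollary asserts that it is nevertheless a curve of constant geodesic curvature.

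The cleanest argument is by homogeneity. For $h\in G$ the left translation $L_h\colon gK\mapsto hgK$ is an isometry of $X_n$ for the submersion metric (this is exactly the left $G$-invariance used in Section 2). Taking the one-parameter family $h_s:=g(0)e^{sX_0}g(0)^{-1}$ one computes directly
$$
L_{h_s}\bigl(c(t)\bigr)=g(0)e^{sX_0}e^{tX_0}K=c(s+t),
$$
so $c$ is an orbit of a one-parameter group of isometries acting on it by a shift of the parameter. I would first note that $c$ has constant speed: since $\pi$ is a Riemannian submersion, the speed of $c$ equals the norm of the horizontal, i.e. symmetric, part of the left-invariant velocity $X_0$, namely $\|\operatorname{sym}\Omega_0\|$, which is manifestly independent of $t$. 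Equivalently, constancy of the speed follows by differentiating the displayed relation at $t=0$, since $L_{h_s}$ is an isometry.

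With constant speed established, the conclusion is formal. An isometry preserves the Levi-Civita connection and all Riemannian norms, so the relation $L_{h_s}\circ c=c(\,\cdot\,+s)$ yields $dL_{h_s}\bigl(\nabla_{\dot c}\dot c|_{t}\bigr)=\nabla_{\dot c}\dot c|_{t+s}$ and therefore $\|\nabla_{\dot c}\dot c|_{t}\|$ is independent of $t$. Since $\|\dot c\|$ is constant and hence $\nabla_{\dot c}\dot c\perp\dot c$, the geodesic curvature $\kappa=\|\nabla_{\dot c}\dot c\|/\|\dot c\|^2$ takes the same value at every point of $c$. The same reasoning shows all higher Frenet curvatures are constant, so $c$ is in fact a homogeneous helix.

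The main point to get right is the submersion bookkeeping behind the assertion that $L_h$ is an isometry of $X_n$ and that the speed of $c$ is the norm of $\operatorname{sym}\Omega_0$; both rest on the $\operatorname{Ad}_{SO(n)}$-invariance of the inner product (\ref{metric}), since conjugation by an orthogonal matrix preserves the $\operatorname{sym}$/$\operatorname{skew}$ splitting and the trace form. A more computational alternative would be to push $\dot g=g\Omega$ forward to $TX_n$ and evaluate $\nabla_{\dot c}\dot c$ via the O'Neill submersion formulas, checking directly that its norm is constant; but the homogeneity argument bypasses this and makes transparent that it is precisely the removal of the $K$-factor $e^{tY_0}$ that converts the geodesic of $G$ into an isometry orbit.
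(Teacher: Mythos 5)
Your proposal is correct and follows essentially the same route as the paper: both drop the $e^{tY_0}\in SO(n)$ factor under the projection $\pi$, recognise the projected curve as the orbit of the one-parameter isometry group generated by $X_0$, and conclude constancy of the geodesic curvature by homogeneity. Your version merely makes explicit two points the paper leaves implicit --- the constant speed of the projection (so the arc-length reparametrisation is harmless) and the conjugated isometries $h_s=g(0)e^{sX_0}g(0)^{-1}$ shifting the parameter --- which is a welcome tightening rather than a different argument.
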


\begin{proof}
Recall that the geodesic curvature $\kappa$ of a curve $\gamma(s)$  parametrised by the arc length $s$  in a Riemannian manifold $M$ is defined as the norm of the covariant derivative of the velocity vector field in the Levi-Civita connection
$$
\kappa =||\frac{D\dot \gamma}{ds}||
$$
(see e.g. \cite{DC}).
From the formula (\ref{geodesics}) we see that, since $e^{tY_0} \in SO(n)$ acts trivially on $X_n$, the projections of the geodesics are orbits of the one-parametric groups $e^{tX_0} \in SL(n,\mathbb R).$
Since $SL(n,\mathbb R)$ acts on $X_n$ by isometries, the geodesic curvature is constant along the orbits. 
\end{proof}

\section{$SL(2,\mathbb R)$ case and hyperbolic magnetic geodesics}

For $n=2$, the space $SL(2,\mathbb R)/SO(2)$  is the hyperolic plane $\mathbb H^2$, so the projection of the geodesics are constant geodesic curvature curves in $\mathbb H^2$.
In the Poincare model  on the upper half-plane these curves are known to be either usual circles, or their arcs lying in the upper half-plane, depending on whether the geodesic curvature $\kappa$ is larger or smaller than the Gaussian curvature $K$ (see e.g. Hedlund \cite{Hedlund-1} and Arnold \cite{Arnold}).

To make all this more explicit, let us use the formulae (\ref{geodesics}) for the geodesics on $SL(2,\mathbb R)$ with metric (\ref{metric2}), assuming for convenience that $k=2$ (which corresponds to the Sasaki metric). 
In that case $$X=\Omega-2 \, \operatorname{skew}\Omega=\Omega^{\top}=\frac{1}{2}M, \quad Y=\Omega-\Omega^{\top},$$ 
or, explicitly, in terms of variables (\ref{abc})
\begin{equation}
\label{omega}
\Omega=\left(
            \begin{array}{cc}
              a & c \\
              b & -a \\
            \end{array}
          \right), \,\, 
          X=\left(
            \begin{array}{cc}
              a & b \\
              c & -a \\
            \end{array}
          \right), \,\, 
Y=\left(
            \begin{array}{cc}
              0 & c-b \\
              b-c & 0 \\
            \end{array}
          \right).
          \end{equation}
          
Note that $X^2=\Delta I, \,\, Y^2=-(b-c)^2I$, where $I$ is the identity matrix and $\Delta=a^2+bc,$  so when $\Delta\neq 0$ we have 
$$
e^{Xt}=\cosh \sqrt{\Delta}t\,I+ \frac{\sinh\sqrt{\Delta}t}{\sqrt{\Delta}}X, \, e^{Yt}=\left(
            \begin{array}{cc}
              \cos(b-c)t & -\sin(b-c)t \\
              \sin(b-c)t &  \cos(b-c)t \\
            \end{array}\right).
            $$
When $\Delta=0,$ then $X^2=0$ and $e^{Xt}=I+Xt.$

Substituting this into the equation (\ref{geodesics}), we have the explicit formula for the geodesic on the group $G=SL(2,\mathbb R)$ passing through $g(0)=g_0$ in direction $\dot g(0)=g_0\Omega \in T_{g_0}G$:
\begin{equation}
\label{geod2}
g(t)=g_0(\cosh \sqrt{\Delta}t \,I+ \frac{\sinh\sqrt{\Delta}t}{\sqrt{\Delta}}X)\left(
            \begin{array}{cc}
              \cos(b-c)t & -\sin(b-c)t \\
              \sin(b-c)t &  \cos(b-c)t \\
            \end{array} \right).
\end{equation}

Assume for simplicity that $g_0=I,$ then acting by the right-hand side on $i \in \mathbb H^2$ gives the projection of the geodesic  to $\mathbb H^2$ explicitly as
\begin{equation}
\label{zt}
 z(t)={\frac {i \left( {{\rm e}^{2\,\sqrt {\Delta}t}\sqrt {\Delta}}-ib{{\rm e}^{2\,
\sqrt {\Delta}t}}+{
{\rm e}^{2\,\sqrt {\Delta}t}}a +\sqrt {\Delta}-a+ib \right) }{{{\rm e}^{2\,\sqrt
{\Delta}t}\sqrt {\Delta}}+ic{{\rm e}^{2\,\sqrt {\Delta}t}}-{{\rm e}^{2\,\sqrt {
\Delta}t}}a+\sqrt {\Delta}+a-ci}}.
\end{equation}
Assuming that $c\neq 0$ one can check that
$$
z(t)-\left(\frac{a}{c}+\frac{c-b}{2c}i\right)=-\left(\frac{a}{c}-\frac{b+c}{2c}i \right)\frac {Z(t)}{\overline{Z(t)}} 
 $$
where
$$
Z(t)={{\rm e}^{2\,\sqrt {\Delta}t}\sqrt {\Delta}}-
{{\rm e}^{2\,\sqrt {\Delta}t}}a-ic{{\rm e}^{2\,
\sqrt {\Delta}t}}+\sqrt {\Delta}+a+ic.
$$
Thus 
$$
\left |z(t)-\frac{2a+(c-b)i}{2c}\right |^2=\left |\frac{2a-(b+c)i}{2c}\right |^2=\frac{4a^2+(b+c)^2}{4c^2},
$$
which gives a Euclidean circle (or the arc of it belonging to the upper half-plane). 
In the case of $c=0$, we have the (part of the) straight line 
$$
z(t)=\frac{b(e^{2\sqrt{\Delta}t}-1)+2aie^{2\sqrt{\Delta}t}}{2a}.
$$

Summarising we have the following result.

\begin{thm}
The projection to $\mathbb{H}^2$  of the geodesic on $SL(2,\mathbb R)$ passing through the identity in the direction $\Omega$ given by (\ref{rela}) with $c\neq 0$ and $\Delta=a^2+bc<0$  is the Euclidean circle centred at $$z_0=(2a+(c-b)i)/2c$$ with radius $$R=\sqrt{4a^2+(b+c)^2}/2c.$$
If $\Delta>0$ the projections are the arcs of these circles lying in the upper half-plane.
When $c=0$ we have the part of the straight line $y=\frac{2a}{b}x+1$ belonging to the upper half-plane.
\end{thm}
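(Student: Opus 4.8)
The plan is to read the statement off the explicit description of the geodesics already obtained, the only genuinely new input being the elementary fact that the quotient of a complex number by its conjugate has modulus one. First I would specialise the geodesic formula (\ref{geod2}) to $g_0=I$ and project to $\mathbb H^2$ by letting $g(t)$ act on the base point $i$ as a M\"obius transformation. Here there is a conceptual simplification: since $Y_0\in so(2)$ generates the stabiliser $PSO(2)$ of $i$, the factor $e^{tY_0}$ acts trivially on $i$, so the projected curve is simply the orbit $z(t)=e^{tX}\cdot i$ of the one-parameter subgroup generated by
$$
X=\left(\begin{array}{cc} a & b \\ c & -a \end{array}\right),
$$
and the expression (\ref{zt}) is its explicit form.

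Second, assuming $c\neq 0$, I would carry out the algebraic rearrangement indicated above, writing $z(t)-z_0=-\bigl(\tfrac{2a-(b+c)i}{2c}\bigr)\,Z(t)/\overline{Z(t)}$ with $z_0=\tfrac{2a+(c-b)i}{2c}$. The decisive observation is that $Z(t)/\overline{Z(t)}$ is a complex number divided by its conjugate and hence has modulus one for every $t$. Therefore $|z(t)-z_0|$ is the constant $\bigl|\tfrac{2a-(b+c)i}{2c}\bigr|$, whose square is $\tfrac{4a^2+(b+c)^2}{4c^2}$; thus the entire orbit lies on the Euclidean circle of centre $z_0$ and radius $R=\sqrt{4a^2+(b+c)^2}/(2|c|)$.

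Third, I would determine which portion of this circle is swept out, using the type of the subgroup $e^{tX}$. Since $\operatorname{tr} X=0$ and $\det X=-\Delta$, the eigenvalues of $X$ are $\pm\sqrt{\Delta}$: when $\Delta<0$ they are purely imaginary, so $e^{tX}$ is elliptic and periodic and the orbit is the \emph{entire} circle; when $\Delta>0$ they are real, $e^{tX}$ is hyperbolic with two real fixed points on the boundary $\partial\mathbb H^2$, and $z(t)$ traverses only the sub-arc lying in the upper half-plane, tending to the two fixed points as $t\to\pm\infty$. Finally the degenerate case $c=0$ (where $\Delta=a^2>0$ and $X$ is upper triangular) must be handled directly: substituting into (\ref{zt}) gives $y(t)=e^{2\sqrt{\Delta}\,t}$ together with a linear relation between $x$ and $y$, yielding the line $y=\tfrac{2a}{b}x+1$, which is the limiting case $R\to\infty$.

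The only laborious step is the algebraic identity of the second paragraph; everything else is bookkeeping. I expect the main (purely computational) obstacle to be verifying that identity cleanly, in particular tracking the factor $Z(t)$ through the M\"obius action without sign or conjugation errors, rather than any conceptual difficulty, since the classical picture, namely that elliptic rotations trace full circles and hyperbolic translations trace equidistant arcs, guarantees in advance that the projected orbit must be a circular arc or a straight line.
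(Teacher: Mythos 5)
Your proposal follows essentially the same route as the paper: specialise (\ref{geod2}) to $g_0=I$, use that the rotation factor $e^{tY}$ fixes $i$ to obtain (\ref{zt}), and deduce that $|z(t)-z_0|$ is constant from the unimodularity of $Z(t)/\overline{Z(t)}$, with the $c=0$ case handled by direct substitution. Your third step (classifying $e^{tX}$ as elliptic or hyperbolic via the eigenvalues $\pm\sqrt{\Delta}$ to explain full circle versus arc) is a welcome elaboration of a point the paper merely asserts, and writing the radius with $2|c|$ quietly corrects a small sign imprecision in the statement.
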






The three different cases depending on the sign of $\Delta$ are shown on Fig.1.

\begin{figure}[h]
  \includegraphics[height=30mm,width=48mm]{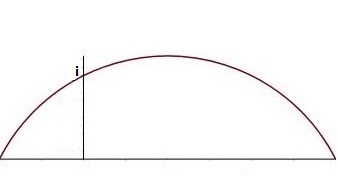}   \includegraphics[width=32mm]{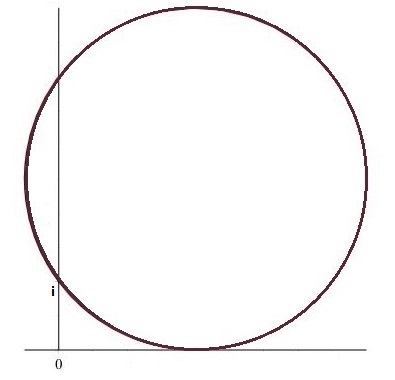}    \includegraphics[width=35mm]{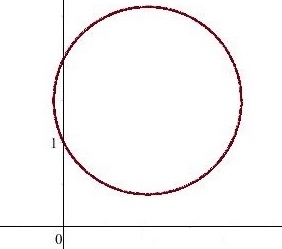}
  \caption{Projection of geodesics  with $\Delta>0$ (left), $\Delta=0$ (middle), $\Delta<0$ (right) respectively.}
  \end{figure}
  
 Since the Euclidian circles are known to be the curves of constant geodesic curvature, it is natural to consider this relation in more detail.
 
Computation of the Christoffel symbols \cite{DFN}
	$$\Gamma^k_{\,\,ij}=\frac12 g^{kl}(g_{li,j}+g_{lj,i}-g_{ij,l})$$
	for the hyperbolic metric $ds^2=\frac{1}{y^2}(dx^2+dy^2)$ gives (with $x^1=x, x^2=y$)
	$$\Gamma^1_{\,\,12}=\Gamma^1_{\,\,21}=\Gamma^2_{\,\,22}=-\frac{1}{y^2}, \, \Gamma^2_{\,\,11}=\frac{1}{y^2},$$
with all other symbols to be zero. The equation for geodesics 
$$
\frac{D\dot x}{ds}=\ddot{x}^k+\Gamma^k_{\,\,ij}\dot{x^i}\dot{x^j}=0
$$
in our case with $z=x+iy$ becomes 
$$
\frac{D\dot z}{ds}=\ddot{z}+\frac{i\dot{z}^2}{y}=0.
$$ 
The equation for curves $z(s)$ of constant geodesic curvature $\kappa$  in the arc length parameter $s$  is
 \begin{equation}
\label{geodcc} 
\frac{d^2 z}{ds^2}+\frac{i}{y}\left(\frac{dz}{ds}\right)^2=i\kappa \frac{dz}{ds},
\end{equation}
where by definition the velocity $||\frac{dz}{ds}||$ (computed in the hyperbolic metric) is equal to 1.

In the parameter $t$ when this velocity $V=||\frac{dz}{dt}||$ is constant (so $s=Vt$), we have the equation
 \begin{equation}
\label{geodV} 
\frac{d^2 z}{dt^2}+\frac{i}{y}\left(\frac{dz}{dt}\right)^2=i\kappa V\frac{dz}{dt},
\end{equation}

Note that the right-hand side can also be interpreted as the Lorenz force in the magnetic field defined by the 2-form 
$Bd\sigma=Bdx\wedge dy/y^2$ with the density $B=\kappa V$, so the same equations describe magnetic geodesics on the hyperbolic plane (see e.g. \cite{Arnold}, \cite{Miranda}).

%

Using the explicit formula (\ref{zt}) one can check that the projection of the corresponding geodesic  (\ref {geod2}) has constant velocity 
$
V=\sqrt{4a^2+(b+c)^2}
$
and satisfies the equation (\ref{geodV}) with $$\kappa=\frac{b-c}{\sqrt{4a^2+(b+c)^2}},$$
where we assigned the sign to the geodesic curvature using equation (\ref{geodV}).

\begin{thm}
The projection of the geodesics on $SL(2,\mathbb R)$ with metric (\ref{metric2}) and momentum $M$ given by (\ref{abc}) to $\mathbb H^2$ is a curve of constant geodesic curvature
 \begin{equation}
\label{kappa} 
\kappa=\frac{b-c}{\sqrt{4a^2+(b+c)^2}}
\end{equation}
parametrised by $t=s/\sqrt{4a^2+(b+c)^2}$, where $s$ is the arc length.

Equivalently, it can be described as a magnetic geodesic in the constant magnetic field with density $B=b-c.$
\end{thm}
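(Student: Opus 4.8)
The plan is to establish the value of $\kappa$ (with the stated parametrisation) and the magnetic reformulation by a single localised computation, using homogeneity to reduce everything to the point $t=0$. First I would reduce to the case $g_0=I$: since the metric (\ref{metric2}) is left-invariant, $SL(2,\mathbb R)$ acts on $\mathbb H^2$ by isometries, and isometries preserve both arc length and geodesic curvature, so it suffices to treat the geodesic through the identity. For $k=2$ formula (\ref{geodesics}) reads $g(t)=e^{tX_0}e^{tY_0}$, and since $Y_0\in so(2)$ generates the stabiliser $SO(2)$ of the point $i$, the projection collapses to the orbit $z(t)=e^{tX_0}\cdot i$ of the one-parameter isometry subgroup $e^{tX_0}$. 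Two facts then follow at once: the orbit has constant hyperbolic speed $V$, because the time-$\tau$ map $e^{\tau X_0}$ is an isometry carrying $z(t)$ to $z(t+\tau)$ and hence $\dot z(t)$ to $\dot z(t+\tau)$; and, by the constant-curvature Corollary of Section 3, its geodesic curvature is constant. Both $V$ and $\kappa$ may therefore be read off at $t=0$ alone, which avoids differentiating the cumbersome closed form (\ref{zt}) globally.

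Next I would take the $2$-jet $e^{tX_0}=I+tX+\tfrac12 t^2\Delta\,I+O(t^3)$, valid because $X^2=\Delta I$, and apply the M\"obius action to $i$. This gives $z(0)=i$ together with $\dot z(0)=(b+c)+2ai$ and a value for $\ddot z(0)$; from the first, $V=|\dot z(0)|/y(0)=\sqrt{4a^2+(b+c)^2}$, which also yields the parametrisation $s=Vt$. Substituting $z(0),\dot z(0),\ddot z(0)$ into the constant-curvature equation (\ref{geodV}) at $t=0$, I expect both sides to carry the common factor $-2a+(b+c)i=i\,\dot z(0)$; once it cancels, the identity collapses to the scalar relation $\kappa V=b-c$, which together with the value of $V$ is precisely (\ref{kappa}). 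The magnetic statement is then immediate from the same relation: the right-hand side of (\ref{geodV}) is the Lorentz force of the field $B\,dx\wedge dy/y^2$ with density $B=\kappa V$, and since $\kappa V=b-c$ we obtain $B=b-c$.

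The algebra is light once localised at $t=0$, so the real care goes into the structural points. The main things to justify are constant speed and constant curvature, handled above by the orbit/isometry argument rather than by global differentiation of (\ref{zt}); the uniform treatment of the cases $\Delta<0$, $\Delta=0$, $\Delta>0$ and $c=0$, which the $t=0$ jet does automatically since it never divides by $\sqrt{\Delta}$ or by $c$; and the sign of $\kappa$, which is purely conventional and is fixed by writing the Lorentz term in (\ref{geodV}) as $+\,i\kappa V\dot z$. The one step I would verify most carefully is the cancellation of the factor $i\,\dot z(0)$ in (\ref{geodV}), as it is exactly what forces the clean answer $\kappa V=b-c$ and thereby ties the geometric and magnetic descriptions together.
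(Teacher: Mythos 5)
Your proof is correct, but it takes a genuinely different route from the paper's. The paper proves the theorem by direct global verification: it states that "using the explicit formula (\ref{zt}) one can check" that $z(t)$ has constant hyperbolic velocity $V=\sqrt{4a^2+(b+c)^2}$ and satisfies (\ref{geodV}) with the stated $\kappa$ — a computation that must carry the transcendental factors $e^{2\sqrt{\Delta}t}$ through the differentiation and, at least implicitly, treat the cases $\Delta<0$, $\Delta=0$, $\Delta>0$ and $c=0$ separately. You instead exploit equivariance: since $z(t)=e^{tX_0}\cdot i$ (the $SO(2)$-factor acting trivially) and $e^{\tau X_0}$ is an isometry carrying $z(t)$ to $z(t+\tau)$, both the speed and the geodesic curvature are transported along the orbit, so everything reduces to the $2$-jet at $t=0$. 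I have checked that your jet computation closes up as you predicted: with $\dot z(0)=(b+c)+2ai$ and $\ddot z(0)=2(a-ci)\dot z(0)$ (the generating vector field is $v(z)=b+2az-cz^2$, so $\ddot z=v'(z)\dot z$ along the orbit), the left-hand side of (\ref{geodV}) at $t=0$ equals $-2a(b-c)+i(b-c)(b+c)=(b-c)\,i\,\dot z(0)$, so the factor $i\dot z(0)$ cancels and $\kappa V=b-c$, which is (\ref{kappa}) and the magnetic statement $B=\kappa V=b-c$ simultaneously. Two small points you should make explicit: the curvature appearing in (\ref{geodV}) is the \emph{signed} curvature, so the transporting isometries must be orientation-preserving — they are, since elements of $PSL(2,\mathbb R)$ act holomorphically on $\mathbb H^2$; and the cancellation requires $\dot z(0)\neq 0$, i.e. $4a^2+(b+c)^2\neq 0$, which is precisely the degenerate case (projection to a single point, $\mathcal C=\infty$) already excluded by the theorem's formula. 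What your route buys is a uniform, case-free derivation that never divides by $\sqrt{\Delta}$ or by $c$ and that makes the reduction to $g_0=I$ by left-invariance explicit, where the paper leaves it implicit; what the paper's route buys is economy of means, since the explicit formula (\ref{zt}) is already in hand from the circle description of the preceding theorem, so its verification is a one-line continuation of a computation it has to do anyway.
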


For the non-normalised metrics (\ref{metric}) the quotient $SL(2,\mathbb R)/SO(2)$ has Gaussian curvature $K=-2\alpha^{-1}$ and $$\kappa=K\frac{c-b}{\sqrt{4a^2+(b+c)^2}},$$
so the ratio
$$\mathcal C:=\frac{\kappa^2}{K^2}=\frac{(b-c)^2}{4a^2+(b+c)^2}$$
is independent on the choice of the metric.

Depending on the values of  $\mathcal C$  the corresponding curves on the hyperbolic plane are called {\it hypercycles} when $\mathcal C<1$,
{\it horocycles} when $\mathcal C=1$ and {\it hyperbolic circles} when $\mathcal C>1$ (see Caratheodory \cite{Caratheodory}).
The special case of hypercycles with $\mathcal C=0$ are the usual geodesics with zero geodesic curvature. 

Hedlund \cite{Hedlund-1} was probably the first to study the properties of these curves on the quotient $\mathcal M^2_\Gamma=\Gamma\backslash \mathbb{H}^2$ of hyperbolic plane by the Fuchsian groups $\Gamma \subset PSL(2,\mathbb R)$. They were studied later by Arnold \cite{Arnold}, see also more recent papers by Paternain et al \cite{BP,Pat}, Taimanov \cite{T2004} and Miranda \cite{Miranda} and references therein.

Recall that the geodesic flow on a surface $\mathcal M^2$ is called {\it transitive} if there exists a geodesic which is everywhere dense on the unit tangent bundle of $\mathcal M^2.$

Let $\Gamma \subset PSL(2,\mathbb R)$ be a {\it cocompact Fuchsian group}, so that the quotient $\mathcal M^2_\Gamma=\Gamma\backslash \mathbb{H}^2$ is compact.
 
 We believe that the following result is true for all {\it cofinite Fuchsian groups} $\Gamma$ with the quotients $\mathcal M^2_\Gamma$ of finite area, but we cannot find a proper reference.


\begin{thm} (Hedlund \cite{Hedlund-1}, Arnold \cite{Arnold})
The magnetic geodesic flow (\ref{geodcc}) on $\mathcal M^2_\Gamma$ is transitive if and only if $\kappa\leq 1.$
The metric entropy $h(\kappa)$ of such flow is $$h(\kappa)=\sqrt{1-\kappa^2}.$$
\end{thm}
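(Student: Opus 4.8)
The plan is to recognise the magnetic geodesic flow as a homogeneous flow on $\mathcal M^3_\Gamma=\Gamma\backslash PSL(2,\mathbb R)$ and then bring in the standard machinery of homogeneous dynamics (Moore's ergodicity theorem and Pesin's entropy formula). First I would use the explicit description (\ref{geod2}) of the projected curves together with the identification $S\mathbb H^2=PSL(2,\mathbb R)$: after rescaling to the arc-length parameter $s=Vt$ with $V=\sqrt{4a^2+(b+c)^2}$, the lift of the magnetic geodesic flow to the unit tangent bundle is a \emph{right translation} $R_s\colon \Gamma g\mapsto \Gamma g\exp(s\Xi)$ by a one-parameter subgroup, whose generator $\Xi$ is conjugate in $SL(2,\mathbb R)$ to $\Xi_\kappa=A_0+\kappa R_0$ with $A_0=\tfrac12\mathrm{diag}(1,-1)$ the geodesic (hyperbolic) generator and $R_0=\tfrac12\bigl(\begin{smallmatrix}0&1\\-1&0\end{smallmatrix}\bigr)$ the $SO(2)$-rotation generator. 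Since the dynamics of a right-translation flow on $\Gamma\backslash G$ depends only on the conjugacy class of the generator (conjugation by $h$ is intertwined with the measure-preserving right translation $R_{h^{-1}}$ of $\Gamma\backslash G$), I may take $\Xi=\Xi_\kappa$. A one-line computation using $A_0^2=\tfrac14 I$, $R_0^2=-\tfrac14 I$ and $A_0R_0+R_0A_0=0$ gives $\Xi_\kappa^2=\tfrac14(1-\kappa^2)I$, so the eigenvalues of $\Xi_\kappa$ are $\pm\tfrac12\sqrt{1-\kappa^2}$. Hence $\Xi_\kappa$ is \emph{hyperbolic} for $\kappa<1$, \emph{parabolic} (nilpotent) for $\kappa=1$, and \emph{elliptic} for $\kappa>1$, exactly matching the hypercycle/horocycle/hyperbolic-circle trichotomy of Fig.~1 and the relation (\ref{kappa}).

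For the transitivity statement I would split according to the conjugacy type of $\Xi_\kappa$. When $\kappa>1$ the subgroup $\{\exp(s\Xi_\kappa)\}$ is conjugate to $SO(2)$, hence relatively compact, so every orbit of $R_s$ is a closed curve (these are precisely the bounded hyperbolic circles), and the flow cannot be transitive on the three-dimensional manifold. When $\kappa\le 1$ the subgroup is non-compact, and since $\Gamma$ is a lattice in $PSL(2,\mathbb R)$, \emph{Moore's ergodicity theorem} implies that $R_s$ is ergodic with respect to the finite Haar (Liouville) measure. Ergodicity together with the full support of Haar then forces almost every orbit to be dense: for a countable basis of open sets, each has positive measure, so a.e.\ orbit visits each of them. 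In particular a dense orbit exists and the flow is transitive, settling ``transitive $\iff \kappa\le 1$''.

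For the entropy I would compute the Lyapunov spectrum of $R_s$ directly from the adjoint action. The derivative of $R_s$ in the left trivialisation is $\mathrm{Ad}(\exp(-s\Xi_\kappa))=\exp(-s\,\mathrm{ad}_{\Xi_\kappa})$, and the eigenvalues of $\mathrm{ad}_{\Xi_\kappa}$ on $sl(2,\mathbb R)$ are the differences of the eigenvalues of $\Xi_\kappa$, namely
\[
0,\qquad \pm\sqrt{1-\kappa^2}.
\]
For $\kappa<1$ these are nonzero on the two root spaces, so $R_s$ is a homogeneous Anosov flow with a single positive Lyapunov exponent $\sqrt{1-\kappa^2}$, constant because both the invariant measure and the splitting are homogeneous. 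Pesin's entropy formula for the smooth invariant measure then yields $h(\kappa)=\sqrt{1-\kappa^2}$. The boundary case $\kappa=1$ is the horocycle flow, generated by a unipotent subgroup; its $\mathrm{ad}$ is nilpotent, all Lyapunov exponents vanish, and the entropy is $0$, in agreement with $\sqrt{1-1^2}=0$.

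The main obstacle is the \emph{cofinite but non-cocompact} case: then $\mathcal M^3_\Gamma$ has cusps, the flow fails to be uniformly hyperbolic on a compact space, and one must invoke Moore's theorem in the full lattice generality together with a version of Pesin's formula valid for finite-volume homogeneous Anosov flows (or, equivalently, read off the entropy from the explicit constant exponent against the normalised Haar measure). The parabolic case $\kappa=1$ is the most delicate point, since transitivity there cannot be deduced from naive minimality — there are closed horocycles encircling the cusps — and requires the unipotent-dynamics results of Hedlund and Dani--Smillie/Ratner. A secondary check is the very first step: one must verify carefully that the unit-tangent lift of the \emph{two-dimensional} magnetic geodesic flow is right translation by $\exp(s\Xi_\kappa)$, and is not the \emph{three-dimensional} Sasaki geodesic frame $g_0e^{tX_0}e^{tY_0}$ of (\ref{geodesics}); the extra factor $e^{tY_0}\in SO(2)$ distinguishes the two flows, and one checks it does not alter the projected magnetic dynamics. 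Finally one should fix the convention $\kappa\ge 0$ (equivalently read $\kappa$ as $|\kappa|$), since only $\kappa^2=\mathcal C$ enters the invariants.
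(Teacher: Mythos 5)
Your proof is essentially correct, but the first thing to say is that the paper contains no proof of this statement at all: it is quoted as a classical result attributed to Hedlund \cite{Hedlund-1} and Arnold \cite{Arnold}, with the authors explicitly remarking that they could not find a proper reference covering cofinite non-cocompact $\Gamma$. The only argument sketched in the surrounding text is the classical one: for $\kappa<1$ the hypercycles are the curves equidistant from geodesics, which exhibits the hypercycle flow as a constant time change of the geodesic flow (a hypercycle at distance $d$ from its axis has curvature $\tanh d$ and arc length stretched by $\cosh d$, whence $h=1/\cosh d=\sqrt{1-\tanh^2 d}=\sqrt{1-\kappa^2}$); the horocycle case $\kappa=1$ is Hedlund's theorem; and for $\kappa>1$ all orbits are periodic. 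Your route is genuinely different: you realise the unit-tangent lift as the right-translation flow by $\exp\bigl(s(A_0+\kappa R_0)\bigr)$ on $\Gamma\backslash PSL(2,\mathbb R)$, read the hypercycle/horocycle/circle trichotomy off $\Xi_\kappa^2=\tfrac14(1-\kappa^2)I$, get transitivity for $\kappa\le 1$ from Moore's ergodicity theorem, and get the entropy from the Lyapunov spectrum $\{0,\pm\sqrt{1-\kappa^2}\}$ of $\mathrm{ad}_{\Xi_\kappa}$ plus Pesin's formula. What this buys is precisely what the authors say they lack: Moore's theorem applies to an arbitrary lattice, so the cocompact and cofinite cases are treated uniformly; note also that your logic runs opposite to the paper's, whose subsequent corollary \emph{deduces} the Lyapunov exponent $\sqrt{1-\kappa^2}$ from the entropy via Pesin, whereas you compute the exponent algebraically and then apply Pesin in the standard direction. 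Two calibration points. First, at $\kappa=1$ you do not need Dani--Smillie or Ratner-type orbit-closure results: ergodicity from Moore already yields a dense orbit, hence transitivity; the finer unipotent dynamics would only be needed for minimality-type statements, so your caution there overstates the difficulty. Second, invoking ``Pesin for finite-volume homogeneous Anosov flows'' in the cusped case is the one soft spot of your write-up, and you can eliminate it using your own conjugacy remark: for $\kappa<1$ the generator $\Xi_\kappa$ is conjugate in $PSL(2,\mathbb R)$ to $\sqrt{1-\kappa^2}\,A_0$, so the magnetic flow is smoothly and measure-preservingly conjugate (by a right translation, which preserves Haar) to the geodesic flow with time rescaled by the constant $\sqrt{1-\kappa^2}$; then Abramov's trivial scaling $h(f_{ct})=c\,h(f_t)$ together with the known metric entropy $1$ of the finite-area hyperbolic geodesic flow gives $h(\kappa)=\sqrt{1-\kappa^2}$ with no Pesin theory at all, and this algebraic conjugacy is exactly the group-theoretic form of the paper's equidistant-curve reduction.
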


\begin{corollary}
The magnetic geodesic flow with $\kappa <1$ has the positive Lyapunov exponent 
 \begin{equation}
\label{lambda} 
\lambda=\sqrt{1-\kappa^2}.
\end{equation}
\end{corollary}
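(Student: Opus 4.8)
The statement will follow by combining the entropy formula of the preceding theorem with the general relation, valid for this class of flows, between metric entropy and Lyapunov exponents. The plan is to observe that for $\kappa<1$ the magnetic geodesic flow on $S\mathcal M^2_\Gamma=\Gamma\backslash PSL(2,\mathbb R)$ is a contact Anosov flow preserving the smooth Liouville measure, and then to read off $\lambda$ from Pesin's entropy formula.

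First I would establish uniform hyperbolicity. Lifting to the group, the flow is governed by the one-parameter action generated by $X=\begin{pmatrix} a & b \\ c & -a \end{pmatrix}$, which is traceless with characteristic roots $\pm\sqrt{\Delta}$, $\Delta=a^2+bc$. Since $\kappa<1$ is equivalent to $\mathcal C<1$, i.e.\ to $\Delta>0$, the generator $X$ is hyperbolic, and the linearised flow splits into a uniformly expanding line, a uniformly contracting line, and the flow direction --- the Anosov splitting. The subcritical condition $\kappa<1$ also makes the energy level of contact type, so the flow is up to reparametrisation a Reeb flow, preserving the smooth Liouville measure, which is ergodic by the transitivity established in the theorem above. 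Because the flow preserves the symplectic form on the two-dimensional contact distribution, the exponents along the stable and unstable lines are opposite, so the spectrum is $-\lambda,\,0,\,\lambda$ with a single positive exponent $\lambda$, constant almost everywhere by ergodicity. Pesin's formula for the smooth invariant measure then identifies the metric entropy with the sum of the positive exponents, $h(\kappa)=\lambda$, and $h(\kappa)=\sqrt{1-\kappa^2}$ gives $\lambda=\sqrt{1-\kappa^2}$.

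As an independent and more explicit check I would compute $\lambda$ directly from the homogeneous description. The expansion rate along the unstable direction in the parameter $t$ is $2\sqrt{\Delta}$, the factor $2$ being the square of the automorphy factor already visible in the identification $\xi=i/(ci+d)^2$, while the bounded rotation generated by $Y\in so(2)$ contributes nothing to the growth. Rescaling to the arc-length parameter $s=Vt$ with $V=\sqrt{4a^2+(b+c)^2}$ and using the elementary identity $1-\kappa^2=4\Delta/V^2$ yields $\lambda=2\sqrt{\Delta}/V=\sqrt{1-\kappa^2}$, in agreement with the entropy computation.

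The main obstacle is the first step: one must verify that the computed expanding and contracting directions genuinely form the Anosov splitting of the magnetic flow rather than an artefact of a particular frame, that the rotation by $Y$ does not mix them in a way altering the growth rate, and that the subcritical contact-type property indeed guarantees a smooth ergodic invariant measure for which Pesin's formula holds with equality. Once these structural facts are in place, both routes give $\lambda=\sqrt{1-\kappa^2}$ at once.
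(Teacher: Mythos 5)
Your first argument is exactly the paper's proof, only with the hypotheses made explicit: the paper disposes of the corollary in two lines by recalling the definition of the Lyapunov exponent and invoking Pesin's formula, $\lambda(f)=h(f)=\sqrt{1-\kappa^2}$, with the entropy value taken from the Hedlund--Arnold theorem just above. The structural facts you worry about (Anosov splitting, symmetric spectrum $\{-\lambda,0,\lambda\}$, ergodicity of the smooth measure) are left implicit there, and your filling them in is sound; note only that the contact-type/Reeb discussion is superfluous for this purpose, since the magnetic flow preserves the smooth Liouville (Haar) measure on every energy level regardless, which is all Pesin's formula requires, and ergodicity already follows from the Anosov property together with the smooth invariant measure. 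Your second, direct computation is a genuine addition not present in the paper and is correct: the identity $1-\kappa^2=\bigl[4a^2+(b+c)^2-(b-c)^2\bigr]/V^2=4\Delta/V^2$ with $V=\sqrt{4a^2+(b+c)^2}$ indeed gives $\lambda=2\sqrt{\Delta}/V=\sqrt{1-\kappa^2}$ after the rescaling $s=Vt$. One small correction to its justification: the factor $2$ in the expansion rate is not "the square of the automorphy factor" but comes from the derivative cocycle acting by the adjoint representation --- for the orbit $g_0e^{tX}e^{tY}$ the growth is governed by $\operatorname{Ad}(e^{-tX})$, and $\operatorname{ad}_X$ on $sl(2,\mathbb R)$ has eigenvalues $0,\pm2\sqrt{\Delta}$ when $X$ has eigenvalues $\pm\sqrt{\Delta}$; the compact factor $e^{tY}\in SO(2)$ is bounded and so, as you say, does not affect the exponents. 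What the direct route buys is independence from the entropy theorem and from Pesin theory altogether; what the paper's (and your first) route buys is brevity and automatic consistency with the entropy bound used later in the non-integrability argument.
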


Recall that the {\it Lyapunov exponent} of the flow $f_t, \, t \in \mathbb R$ on a Riemannian manifold $M$ can be defined (see e.g. \cite{Pesin}) as 
$$
\lambda(f)=\limsup_{t\to+\infty}\frac{1}{t}\ln||df_t(x)||,
$$
which in ergodic case is a constant independent on $x \in M$, describing the rate of separation of infinitesimally close orbits. In our case we can apply Pesin's formula \cite{Pesin}, which says that $$\lambda(f)=h(f)=\sqrt{1-\kappa^2}>0.$$ 

The general hypercyclic case $\kappa<1$ can be reduced to the geodesic flow case with $\kappa=0$ since the hypercycles are known to be the curves equidistant from the geodesics (see \cite{Arnold, Hedlund-1}). The most subtle horocyclic case $\kappa=1$ was sorted by Hedlund in \cite{Hedlund-1}. In the case $\kappa>1$ all the orbits are periodic.

Taimanov \cite{T2004} interpreted these results in terms of the integrability of the corresponding magnetic geodesic flow on $\mathcal M^2_\Gamma$. Indeed, since the geodesic curvature of the magnetic geodesics is $\kappa=B/V$, where $B$ is the density of the magnetic field and $V$ is the velocity, the integrability condition $\kappa>1$ is equivalent to the condition $V<B.$

We will use these results now to study the geodesic flow on three-folds $\mathcal M^3_\Gamma=\Gamma\backslash PSL(2,\mathbb R).$

\section{Geodesic flow on Fuchsian quotients of $PSL(2,\mathbb R)$}

Let $\Gamma\subset G = PSL(2,\mathbb R)$ be a finitely generated Fuchsian group with the compact quotient $\Gamma\backslash \mathbb H^2=\mathcal M_\Gamma^2$ and consider $SL(2,\mathbb R)$-manifold 
    $$\mathcal M_\Gamma^3=\Gamma\backslash G.$$
It is known (see e.g. Thurston \cite{Th}) that in the compact case 
    $$\mathcal M_\Gamma^3=S\mathcal M_\Gamma^2 \subset T\mathcal M_\Gamma^2$$
    is the unit tangent bundle of the corresponding surface $\mathcal M_\Gamma^2$ of genus $g\geq 2.$
    In the non-compact case (for example, for $\Gamma=PSL(2,\mathbb Z)$) the quotient could be an orbifold, so one should be a bit more careful here.


Let $\Omega=g^{-1}\dot g \in \mathfrak g$, $M=A(\Omega) =\alpha\left(
            \begin{array}{cc}
              a & b \\
              c & -a \\
            \end{array}
          \right) \in \mathfrak g^*$ be the left angular velocity and left momentum as before. Note that $a,b,c$ can be considered as  left invariant functions on $T^*SL(2,\mathbb R).$
          
Introduce now the right momentum 
$$
m=gMg^{-1}=\alpha\left(
            \begin{array}{cc}
              u & v \\
              w & -u \\
            \end{array}
          \right) \in \mathfrak g^*.
$$
It is well-known (see Arnold \cite{Arnold}) that $m$ is preserved: $\dot m=0,$ which can be also easily checked directly using Euler-Poincare equations (\ref{euler's eq}).

The matrix elements $u,v,w$ of the right momentum are linear functions on $\mathfrak g^*$ and thus are right-invariant functions on $T^*G.$

In terms of these functions, using relation (\ref{omega}), the Hamiltonian of the geodesic flow on $T^*\mathcal M^3_\Gamma=\Gamma\backslash T^*G$ for the metric (\ref{metric}) can be written as
 \begin{equation}
\label{H}
H=\frac12 (M,\Omega)=\frac{\alpha}{4\beta}(\beta[4a^2+(b+c)^2]-\alpha (b-c)^2),
\end{equation}
which is well defined function on $T^*\mathcal M^3_\Gamma$. 

Another obvious integral is the Casimir function of the corresponding Poisson bracket on $\mathfrak g^*$
$$
\Delta=-\det M=-\det m=a^2+bc=u^2+vw.
$$
For Liouville integrability we need one more analytic integral. On the group $G$ we can take, for example,
$F=v-w$ corresponding to the left $SO(2)$ invariance of the metric, but it is not $\Gamma$-invariant.

To find the invariants we need to study the action of $\Gamma \subset G$ on the space of the conserved momenta $m$, which is $\mathfrak g^*.$ The action of the group $G$
preserves $\Delta$, which defines the pseudo-Euclidean structure on $\mathfrak g^*$ and thus an isomorphism of $PSL(2,\mathbb R)$ with the identity connected component $SO(2,1)_0$ of the group $SO(2,1).$

The cone $\Delta=u^2+vw=0$ splits $\mathfrak g^*$ into two open regions: $\Delta>0$ and $\Delta<0$, foliated by the one- and two-sheeted hyperboloids $\Delta=\delta$ respectively (see Figure 2).

\begin{figure}[h]
  \includegraphics[width=150mm]{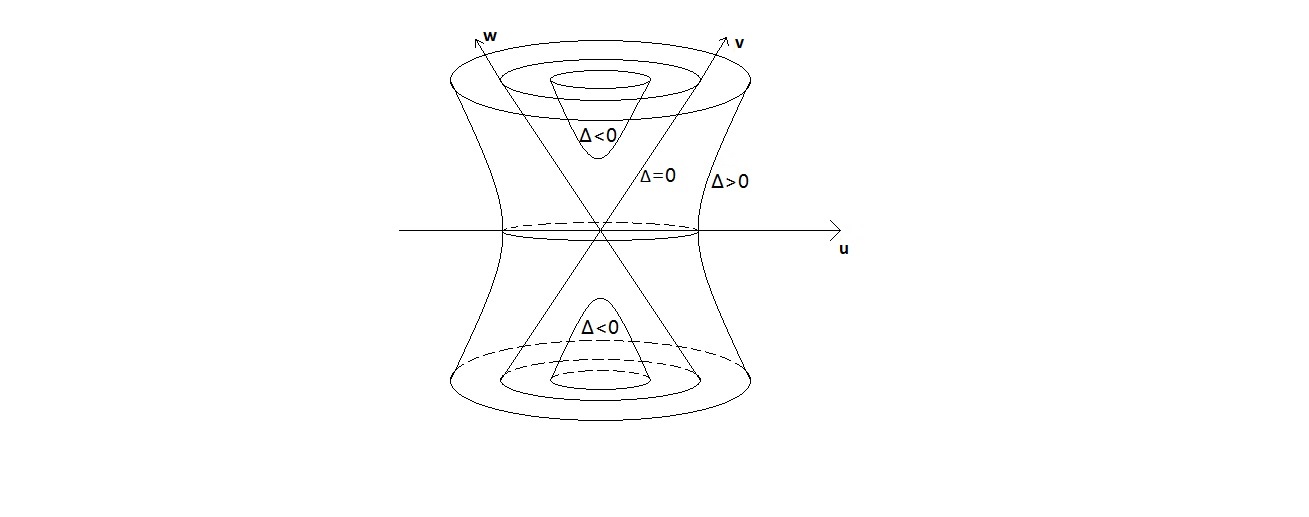}  
  \caption{Symplectic leaves $\Delta=\delta$ of the Poisson bracket on $\mathfrak g^*.$}
\end{figure}
  
 A sheet of two-sheeted hyperboloid $H_\delta$ given by $\Delta=\delta, \delta<0$ represents a well-known model of the hyperbolic plane $\mathbb H^2$, so the action of $\Gamma$ is discrete here.  From the theory of automorphic functions it follows that there exists a non-constant real analytic $\Gamma$-invariant function $F$ on $H_\delta$. We can extend it by homogeneity to a function $F(m)$  defined on the domain $\{\Delta < 0\} \subset \mathfrak g^*$. 
  
In contrast, on one-sheeted hyperboloid $\Delta=\delta, \delta>0$ the generic orbits of $\Gamma$ are known to be dense (see e.g. \cite{Bo}, Section VI, Property 2.12). 
Let us sketch the proof of this, kindly provided to us by John Parker \cite{Parker}.

First, we remind the following correspondence due to Klein \cite{Klein} between the one-sheeted hyperboloid and the double cover of the space of geodesics on the hyperbolic plane.

Recall that Klein's model of the hyperbolic plane is the interior of the conic on the real projective plane. The points outside the conic correspond by polarity to the straight chords inside the conic, which are the lines in Klein's model. Representing this conic by $\Delta=0$ we have the correspondence required.
According to Hedlund \cite{Hedlund-1} almost every hyperbolic geodesic is dense in the quotient, so the same is true for generic orbits of $\Gamma$ on the one-sheeted hyperboloids $H_\delta$ with $\delta>0.$

Summarising we have the following

\begin{thm}
The geodesic flow on $T^*\mathcal M^3_\Gamma$ has no smooth right-invariant integrals $F$ independent from $\Delta$ in the part of the phase space $T^*\mathcal M^3_\Gamma$ with $\Delta\geq 0.$
\end{thm}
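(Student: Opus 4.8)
The plan is to reduce the statement to the density of generic $\Gamma$-orbits on the one-sheeted hyperboloids $H_\delta=\{\Delta=\delta\}$, $\delta>0$, which was sketched above via Klein's correspondence and Hedlund's theorem.

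First I would pin down how a right-invariant integral descends to the quotient. Any right-invariant function on $T^*G$ is a function $F=F(m)$ of the conserved right momentum alone, and is automatically a first integral since $\dot m=0$. Under the left action $g\mapsto\gamma g$ of $\Gamma$ the left momentum $M$ is unchanged, so $m=gMg^{-1}\mapsto\gamma m\gamma^{-1}=\operatorname{Ad}_\gamma m$; hence $F$ descends to a single-valued smooth integral on $T^*\mathcal M^3_\Gamma=\Gamma\backslash T^*G$ precisely when $F(\operatorname{Ad}_\gamma m)=F(m)$ for all $\gamma\in\Gamma$. Under the identification $\mathfrak g^*\cong\mathbb R^3$ and $PSL(2,\mathbb R)\cong SO(2,1)_0$ this is exactly $\Gamma$-invariance of $F$ for the linear $SO(2,1)$-action preserving $\Delta$.

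Next, fix $\delta>0$ and work on $H_\delta$. By the density result above there is a point $m_0\in H_\delta$ whose orbit $\{\operatorname{Ad}_\gamma m_0:\gamma\in\Gamma\}$ is dense in $H_\delta$. Since $F$ is $\Gamma$-invariant it is constant, equal to $F(m_0)$, on this orbit, and since $F$ is continuous it is constant on the closure, i.e. on all of $H_\delta$. Because $\operatorname{Ad}_\gamma$ is linear it commutes with the dilations $m\mapsto\lambda m$, which map $H_\delta$ onto $H_{\lambda^2\delta}$; hence the orbit dynamics is conjugate across all $\delta>0$ and $F$ is constant on every leaf $H_\delta$ with $\delta>0$. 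Therefore $F=\phi(\Delta)$ on the open region $\{\Delta>0\}$ for some function $\phi$, so $dF=\phi'(\Delta)\,d\Delta$ and $dF\wedge d\Delta\equiv 0$ there, meaning $F$ is functionally dependent on $\Delta$. By continuity of $dF$ this dependence extends to the cone $\Delta=0$, which is nowhere dense and therefore cannot carry an independent integral either; this contradicts the assumed independence and proves the claim on all of $\{\Delta\geq 0\}$.

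The main obstacle is the transition from the measure-theoretic density statement to a genuine dense single orbit on each fixed leaf $H_\delta$, together with the verification that continuity alone, rather than any dynamical information about the flow, forces $F$ to be constant on the whole leaf. The dilation argument is what upgrades density on one leaf to all $\delta>0$, and smoothness of $F$ enters only to phrase the final conclusion as the functional dependence $dF\wedge d\Delta=0$.
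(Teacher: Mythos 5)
Your proposal is correct and takes essentially the same route as the paper: a right-invariant integral is a function of the conserved right momentum $m$, it descends to $\Gamma\backslash T^*G$ exactly when it is invariant under the adjoint ($SO(2,1)$) action of $\Gamma$, and the density of generic $\Gamma$-orbits on the one-sheeted hyperboloids $H_\delta$, $\delta>0$ (established via Klein's correspondence and Hedlund's theorem) forces any continuous such $F$ to be constant on each leaf, hence a function of $\Delta$ alone, with the conclusion extending to the cone $\Delta=0$ by continuity. Your dilation argument for transferring density between leaves and the explicit formulation $dF\wedge d\Delta\equiv 0$ are only minor elaborations of what the paper leaves implicit.
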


We can relax now the assumption that the integral $F$ must be right-invariant proving our main result.

Recall that the topological entropy $h_{top}(\phi)$ of the geodesic flow $\phi_t$ on $M=S\mathcal M$  is defined as
$$
h_{top}(\phi):= \lim_{\epsilon \to 0}\limsup_{T\to+\infty}\frac{\log N(\phi, T,\epsilon)}{T},
$$
where $N(\phi, T, \epsilon)$ be the minimal cardinality of a finite set $X=X(\epsilon)\subset M$ such that for any $v\in M$ there exists $w\in X$ such  that $$\sup\limits_{ 0\leq t\leq T} d(\phi_t v, \phi_{t}w)< \epsilon$$ (see details and history of the notion in Katok's review \cite{Katok}).

The topological entropy satisfies the inequality
$$
h_{top}(\phi)\geq h_{\mu}(\phi)
$$
for metric entropy  $h_mu(\phi)$  defined with respect to any ergodic $\phi$-invariant measure $\mu.$ It has also the following properties (see e.g. \cite{JL}).

Let $\phi_t$ and $\psi_t$ be the flows on compact manifolds $X$ and $Y$ respectively and $f: X \rightarrow Y$ be a map such that $\psi_t\circ f=f \circ \phi_t.$ Then
\begin{itemize}
\item If $f$ is injective then $h_{top}(\phi)\leq h_{top}(\psi),$
\item If $f$ is surjective then $h_{top}(\phi)\geq h_{top}(\psi).$
\end{itemize}

 \begin{thm} Geodesic flow on $T^*\mathcal M_\Gamma^3$ is integrable in analytic Liouville's sense in the open region of the phase space with $\Delta=a^2+bc<0.$ 
 
 In the region with $\Delta>0$ there are no smooth integrals independent from $H$ and $\Delta.$ At the integral level $\mathcal C<1$ the system has positive topological entropy  
 \begin{equation}
\label{ineq}
h_{top}\geq \sqrt{1-\mathcal C}.
 \end{equation}
 \end{thm}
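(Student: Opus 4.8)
The plan is to prove the theorem in three separate pieces, corresponding to the three assertions: analytic Liouville integrability when $\Delta<0$, the non-existence of smooth integrals when $\Delta>0$, and the entropy estimate on the sublevel $\mathcal C<1$. The first two pieces follow almost directly from the geometric dichotomy already established, while the entropy bound requires a genuine dynamical comparison argument.

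\textbf{Integrability for $\Delta<0$.} On the region $\{\Delta<0\}$ the conserved right-momentum $m$ takes values on the two-sheeted hyperboloids $H_\delta$, which are copies of $\mathbb H^2$ carrying a discrete $\Gamma$-action. As noted just before the statement, automorphic-function theory furnishes a non-constant real-analytic $\Gamma$-invariant $F$ on $H_\delta$; extending $F$ by homogeneity gives a $\Gamma$-invariant analytic function on $\{\Delta<0\}\subset\mathfrak g^*$, hence a well-defined analytic function on $T^*\mathcal M^3_\Gamma$ restricted to this region. I would then check that $H$, $\Delta$, $F$ are functionally independent and pairwise Poisson-commute: $H$ and $\Delta$ commute since $\Delta$ is a Casimir; $F$ is right-invariant so it commutes with the left-invariant $H$; and $F$ commutes with $\Delta$ because $\Delta$ is Casimir. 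Independence is generic and can be read off from the frequencies $\omega_1,\omega_2$ listed in the introduction. This gives three independent commuting analytic integrals on a $3$-dimensional configuration space, i.e. analytic Liouville integrability on $\{\Delta<0\}$.

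\textbf{Absence of integrals for $\Delta>0$.} The density of generic $\Gamma$-orbits on the one-sheeted hyperboloids $H_\delta$, $\delta>0$ (the Klein-model argument attributed to Parker), shows that any $\Gamma$-invariant \emph{continuous} function of $m$ must be constant on a dense set of each leaf, hence constant on the leaf. This is the content of Theorem~5.2 for right-invariant integrals. To upgrade to \emph{arbitrary} smooth integrals independent of $H$ and $\Delta$, I would argue that a smooth first integral, being constant along trajectories, must in the ergodic/dense regime descend to a function of the Casimir $\Delta$ and the energy $H$ alone: along a trajectory whose projection to $\mathfrak g^*$ has dense image in a leaf, any continuous integral is forced to be a function of $(H,\Delta)$ only. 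The main technical point here is to pass from right-invariant integrals to general smooth ones—this is where I expect to invoke the density result together with averaging over the flow to eliminate any dependence on the remaining coordinate.

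\textbf{Entropy bound on $\mathcal C<1$, the main obstacle.} This is the hardest part. The idea is to exhibit a semiconjugacy between the geodesic flow on $T^*\mathcal M^3_\Gamma$ restricted to a level $\{\mathcal C=\mathrm{const}<1\}$ and the magnetic geodesic flow on $\mathcal M^2_\Gamma$ with curvature $\kappa=\sqrt{\mathcal C}$, whose metric entropy is $\sqrt{1-\mathcal C}$ by Theorem~4.6 and its corollary. Concretely, the Riemannian submersion $\pi:\mathcal M^3_\Gamma\to\mathcal M^2_\Gamma$ together with the curvature relation \eqref{curvature} identifies projected geodesics with magnetic geodesics of prescribed $\kappa$, so $\pi$ intertwines the two flows (after the time-reparametrisation $s=Vt$ recorded in Theorem~4.5). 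I would then apply the surjectivity property of topological entropy stated just before the theorem: since $\pi$ maps the flow on the compact level set onto the magnetic flow, $h_{top}$ of the upstairs flow dominates $h_{top}$ of the downstairs flow, and the latter in turn dominates the metric entropy $h_\mu=\sqrt{1-\mathcal C}$ via $h_{top}\geq h_\mu$ and Pesin's formula. The delicate steps are verifying that the projection is genuinely a flow semiconjugacy on the relevant level set despite the time change, and handling compactness/properness of the level set in the cofinite (non-compact) case; this last point is exactly where the finite-area rather than compact hypothesis must be used with care, and I expect it to be the principal obstacle.
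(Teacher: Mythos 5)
Your first and third parts essentially coincide with the paper's proof: for $\Delta<0$ the paper takes exactly the automorphic function $F$ on a sheet of $H_\delta$, extends it by homogeneity and right shifts, and checks that $H,\Delta,F$ are independent and Poisson-commute; for the entropy bound it defines $X(\kappa)\subset S\mathcal M^3_\Gamma$ as the union of geodesics at level $\mathcal C=\kappa^2<1$, projects onto $S\mathcal M^2_\Gamma$, and combines the surjectivity monotonicity of $h_{top}$ with Hedlund's metric entropy $\sqrt{1-\kappa^2}$ to get \eqref{ineq}. Your worry about the non-compact cofinite case is legitimate but is handled in the paper only by a remark after the proof, invoking Gurevich--Katok for the entropy of the geodesic flow on non-compact $\mathcal M^2_\Gamma$; the time reparametrisation is harmless since a constant time change on each level does not affect positivity and the constant is absorbed in the identification of the projected flow with the magnetic flow of curvature $\kappa$.

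The genuine gap is in your middle part, and it starts from a factual error: the right momentum $m$ is a first integral, so the projection of a \emph{single} trajectory to $\mathfrak g^*$ is a single point (more precisely, a single $\Gamma$-orbit arising from the identification of lifts in $\Gamma\backslash T^*G$), never a dense subset of the leaf swept out in time. Consequently the density of generic $\Gamma$-orbits on the one-sheeted hyperboloids constrains only $\Gamma$-invariant \emph{functions of $m$}, i.e.\ right-invariant integrals --- which is precisely the separate, weaker theorem the paper proves first --- and your proposed ``averaging over the flow'' cannot close the gap: averaging produces flow-invariant functions, not right-invariant ones, so there is no mechanism forcing a general smooth integral to factor through $m$. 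The paper's actual argument for the $\Delta>0$ statement is entirely different: it shows there are no invariant Liouville $3$-tori inside a level $\{H=h,\ \Delta=\delta>0\}$, because on such a torus the dynamics would be quasi-periodic winding lines, whose projections to $\mathcal M^2_\Gamma$ stay a bounded distance apart in a suitable sense, whereas at these levels $\mathcal C<1$ and the projected magnetic flow has positive Lyapunov exponent $\sqrt{1-\mathcal C}$ (Hedlund's entropy computation combined with Pesin's formula), so nearby projected trajectories diverge exponentially --- a contradiction; a third smooth integral independent of $H$ and $\Delta$ would produce such tori via the Liouville--Arnold theorem. You should replace your density-plus-averaging step by this hyperbolicity obstruction, which is the one idea in the proof that your proposal is missing.
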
 
 
 \begin{proof}
As we have seen, the level  $\Delta=\delta<0$ defines in $\mathfrak g^*$ two-sheeted hyperboloid $H_\delta$, one sheet of which can be identified with the hyperbolic plane.  Let $F$ be as before a real analytic $\Gamma$-invariant function on $H_\delta$ extended to the open region $\{\Delta < 0\} \subset \mathfrak g^*$ by homogeneity, and extend it further to the corresponding domain of $T^*G$ by the right shifts as $F(m).$ Since $F(m)$ is $\Gamma$-invariant this gives us a well-defined function $F$ on the quotient $T^*\mathcal M_\Gamma^3=\Gamma\backslash T^*G.$

It is easy to see that $H$, $\Delta$ and $F$ are 3 independent analytic integrals on $T^*\mathcal M_\Gamma^3$, which are Poisson commuting. 
Thus the geodesic flow is Liouville integrable in this domain, even in the analytic category. Note that we have one more integral by taking an independent from $F$ real analytic $\Gamma$-invariant function (which will not Poisson commute with $F$, of course), so the Liouville tori are two-dimensional, in agreement with the explicit description of geodesics above.

The problem is however that writing down explicitly the automorphic integral $F(m)$ is only possible in special cases (see the next section).  This is related with the non-effectiveness of the solution of the uniformisation problem in the compact hyperbolic case. 
 
When $\Delta>0$ we can prove a stronger result that in fact the geodesic flow has no invariant 3-dimensional Liouville tori with fixed $H$ and $\Delta=\delta>0$. 
Indeed, let us assume that we have such a torus $T^3 \subset T^*\mathcal M_\Gamma^3$ and consider its projection to $\mathcal M_\Gamma^3$.
Because in that case $\mathcal C<1$ the system has positive Lyapunov exponent and the projections diverge exponentially fast, which is impossible since on the Liouville torus they are simply winding lines.

To show that the topological entropy satisfy the inequality (\ref{ineq}) consider the geodesic flows on the unit tangent bundles of $\mathcal M_\Gamma^3$ and $\mathcal M^2_\Gamma$. We have seen that the geodesics on $\mathcal M_\Gamma^3$ at the integral level $\mathcal C=\kappa^2<1$ project into magnetic geodesics on $\mathcal M^2_\Gamma$ with geodesic curvature $\kappa$. Let $X(\kappa) \subset SM_\Gamma^3$ be the union of all these geodesics, and $f$ be its projection onto $Y=S\mathcal M^2_\Gamma.$ Since the corresponding magnetic geodesic flow on $Y$ has metric entropy $h=\sqrt{1-\kappa^2}=\sqrt{1-\mathcal C}$, by the properties of the topological entropy  for the geodesic flow on $X(\kappa)$ we have the inequality (\ref{ineq}).   
 \end{proof}
 
 To derive now the Theorem 1.1 for the cofinite Fuchsian groups $\Gamma$ with the non-compact quotients $M^2_\Gamma$ we can use the results by Gurevich and Katok \cite{GK}, who proved that the (properly understood) topological entropy of the geodesic flow on $SM^2_\Gamma$ in this case also equals 1 (see Theorem 12 in \cite{GK}).

 \section{Geodesics on the modular 3-fold and knots}
 
 Consider now the special modular case of the group $\Gamma=PSL(2, \mathbb Z)$. In that case the quotient $\mathcal M^2=PSL(2, \mathbb Z)\backslash \mathbb H^2$ is an  orbifold with two orbifold points $i$ and $e^{i\pi/3}$ corresponding to the elliptic elements in $PSL(2, \mathbb Z)$ of order 2 and 3 respectively. 

The modular surface is naturally the moduli space of elliptic curves with the classical function (known as {\it Hauptmodul})  $j(\tau): \mathcal M^2 \rightarrow \mathbb C,$ establishing its equivalence to $\mathbb C.$

\begin{figure}[h]
\includegraphics[width=50mm]{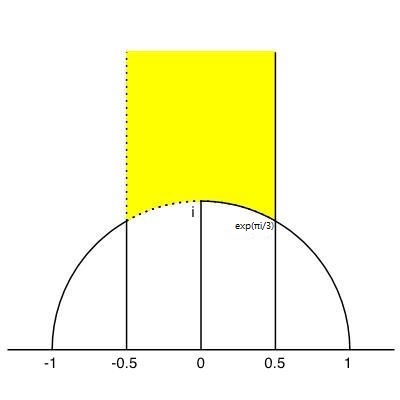}  \quad  \includegraphics[width=50mm]{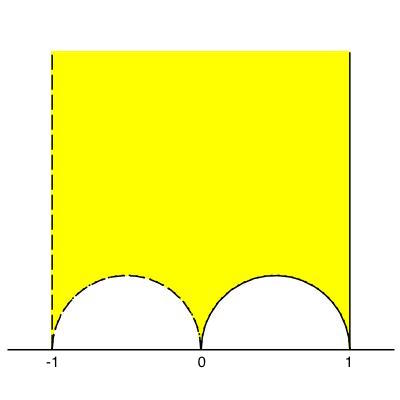}
\caption{The fundamental domains of $\Gamma$ and $\Gamma_2$}
\end{figure}

The geodesics on modular surface $\mathcal M^2$ were studied in detail by Artin in the seminal paper \cite{Artin}, who used the symbolic dynamics and the theory of continued fractions to describe them. 

In particular, Artin showed that the periodic geodesics on the modular surface correspond to the lines in Klein's model determined by the equations with integer coefficients, which are invariant under suitable hyperbolic element $g \in PSL(2,\mathbb Z).$
Any such element preserves an indefinite  binary quadratic form $Q$ with integer coefficients, so we have one-to-one correspondence between periodic geodesics on $\mathcal M^2$ and the set of integer indefinite binary quadratic forms, considered up to proportionality.

Consider now the modular 3-fold $\mathcal M^3=PSL(2, \mathbb Z)\backslash PSL(2, \mathbb R)$. 
In this case we can produce the integrals in the domain of $S\mathcal M^3$ with $\Delta<0$, additional to $H$ and $\Delta,$ explicitly by taking the real and imaginary parts of the Hauptmodul $j$ considered as the function on the one sheet of the hyperboloid $\{\Delta =-1\} \subset \mathfrak g^*$ identified with the upper half-plane $\mathbb H^2.$

The non-integrability of the geodesic flow in the domain with $\Delta>0$ can be shown using the same arguments as before with the positivity of the topological entropy following from the results of Gurevich and Katok \cite{GK}. 

 The important observation, usually attributed to Quillen (see Milnor \cite{Milnor}), is that topologically  
 \begin{equation}
\label{quil}
\mathcal M^3\cong S^3\setminus \mathcal K
\end{equation} 
is equivalent to the complement in $S^3$ to a trefoil knot $\mathcal K$, which is a $(2,3)$-torus knot.\footnote {We are very grateful to Graeme Segal for the discussions of this remarkable fact and the history of its discovery.} Note that the fundamental group of this complement is the braid group $B_3$, so that the modular 3-fold is the quotient
$$
\mathcal M^3=\widetilde{SL(2,\mathbb R)}/B_3.
$$

Recall (see e.g.  \cite{Adams}) that a $(p,q)$-{\it torus knot} $K_{p,q}$ is a special type of knots specified by a pair of coprime integers $p$ and $q$. It can be realised on the surface of the solid torus in $\mathbb R^3$, winding $p$ times around the axis of rotation of the torus and $q$ times around the central circle of the solid torus. When $p=2,q=3$ we have the trefoil knot shown on the left of Fig. 3.

\begin{figure}[h]
\includegraphics[width=50mm]{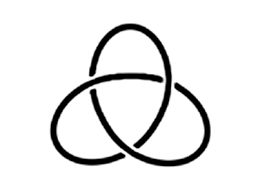} \quad \includegraphics[width=40mm]{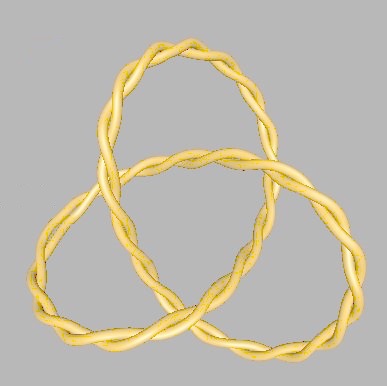}  
\caption{Trefoil knot $\mathcal K$ and its $(2,33)$ cable knot.}
\end{figure}

To prove the homeomorphism (\ref{quil}) note that the quotient $$PSL(2, \mathbb Z)\backslash PSL(2, \mathbb R)=SL(2, \mathbb Z)\backslash SL(2, \mathbb R)$$ can be interpreted as the moduli space of the lattices $\mathcal L$ on the Euclidean plane with fixed area of the quotient, or equivalently the space of elliptic curves $\mathbb C/\mathcal L$ up to real scaling. The corresponding $\wp$-function satisfies the Weierstrass equation 
$$
(\wp')^2=4\wp^3-g_2\wp-g_3
$$
with the discriminant $D=g_2^3-27g_3^2\neq 0.$ The intersection of the unit sphere $S^3 \subset \mathbb C^2(g_2,g_3)$ with the set $D=0$ is a $(2,3)$-torus knot, known as trefoil knot.

Alternatively, one can argue that the natural map $\mathcal M^3 \to \mathcal M^2$ is the Seifert fibration with two singular fibers over orbifold points of order 2 and 3, which implies that the missing fiber over infinity is a $(2,3)$-torus knot. More interesting relations can be found in a nice article \cite{Mostovoy} by Mostovoy.

Note that there is a deep theorem by Gordon and Luecke \cite{GL} saying that any knot in the 3-sphere is determined up to isomorphism by the homeomorphism-type of its complement (it is not true though for links).

The homeomorphism (\ref{quil}) plays a key role in the remarkable work by Ghys \cite{Ghys}, linking closed geodesics on the modular surface $\mathcal M^2$ with periodic orbits in the celebrated Lorenz system \cite{Lorenz}
$$
\begin{cases}
\dot x = \sigma(-x+y)\\
\dot y =  r x -y -xz\\
\dot z = -b z + xy
\end{cases}
$$
depending on positive real parameters $\sigma,r,b,$ where the most essential for us parameter $r$ has the physical meaning of relative Rayleigh number.

Historically this was one of the first computer observation of a chaotic attractor. Lorenz had chosen the values $\sigma=10, b=8/3$ and $r=28$ to observe the peculiar behaviour known now as ``Lorenz butterfly'' shown on the left of Fig. 4.

 \begin{figure}[h]
\includegraphics[width=40mm]{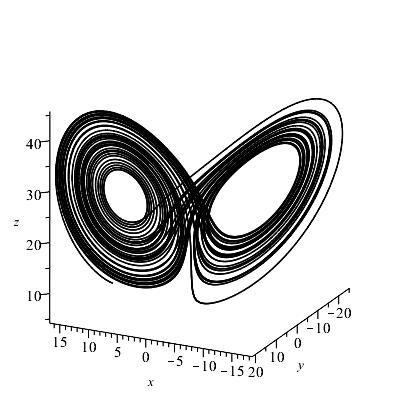}  \, \includegraphics[width=40mm]{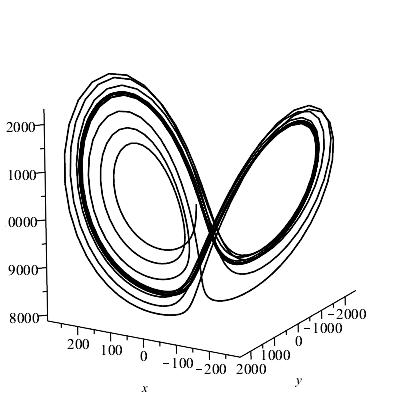} \, \includegraphics[width=40mm]{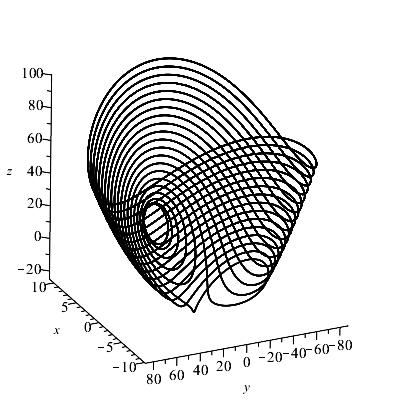} 
\caption{The Lorenz trajectories for $r=28,\,10000$ and $r=\infty.$}
\end{figure}

It is known (see e.g. Strogatz \cite{Strogatz}) that in the formal limit $r\to \infty$ the Lorenz dynamics becomes periodic (see the right of Fig. 4), so the chaos disappears for large $r$  (see Yudovich \cite{Yudovich} {\footnote{We are grateful to Anatoly Neishtadt, who attracted our attention to this not widely known preprint.} and Robbins \cite{Robbins}).  
For large $r$ the Lorenz system can be considered as perturbation of the limiting case $r=\infty$  and has no more than 3 periodic orbits (see \cite{Yudovich, Robbins, MN}).

Birman and Williams \cite{BW} studied periodic trajectories of Lorenz system  in the chaotic region from topological point of view. They observed that these trajectories are knotted in a very special way and studied the corresponding class of knots. In particular, they have shown that all these knots are prime and fibered  and contain as a subclass all the algebraic knots, including all torus knots \cite{BW}. 

Remarkably, as it was proved by Ghys \cite{Ghys}, the same isotopy class of knots appears as canonical lifts to $\mathcal M^3\cong S^3\setminus \mathcal K$ of the periodic geodesics on the modular surface! Some of the lifted geodesics, which can be labelled by the primitive elements of $SL(2, \mathbb Z)$, are shown on Fig. 5, borrowed from \cite{GhL}. \footnote{We are very grateful to Jos Leys for kind permission to use his amazing images.}

 \begin{figure}[h]
\includegraphics[width=55mm]{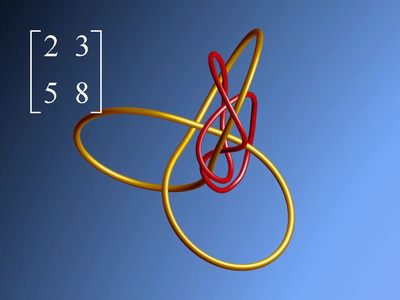} \quad \quad \includegraphics[width=55mm]{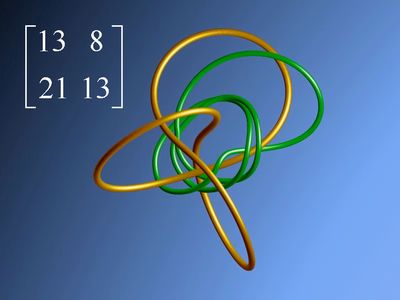} 
\caption{The images of the lifted modular geodesics in the complement of the trefoil knot from \cite{GhL}.}
\end{figure}

Ghys called these lifts {\it modular knots} and proved that their linking number with the trefoil knot $\mathcal K$ is given by the number-theoretic Rademacher function, which allowed Sarnak \cite{Sarnak} to do the refined counting of the modular geodesics by this linking number.

From our perspective all this is about the zero level of the integral $$\mathcal C=\frac{(b-c)^2}{4a^2+(b+c)^2}=0$$ of our geodesic flow on  $S\mathcal M^3$. 
The corresponding geodesics have the form 
$$
g(t) = g_0 e^{tX}, \quad  X = \begin{pmatrix}   a & b \\ b  & -a \end{pmatrix}.
$$
The geodesics considered by Ghys correspond to $a=1$, $b=0$. 
In the general case, the periodic geodesics give equivalent knots, so all periodic geodesics on $\mathcal M^3$ with $\mathcal C = 0$ form modular knots.

Note that $\mathcal C=0$ is the most chaotic level with maximal entropy $h=1.$ Let us consider now the most integrable limit when $\mathcal C\to\infty.$

In that case the projection of the corresponding geodesics on $\mathcal M^3$ are small circles on $\mathcal M^2$, turning to points when $\mathcal C=\infty$, which is the case when
$
a=b+c=0.
$
In that case the geodesics on $\mathcal M^3$ become fibres of the Seifert fibration $\mathcal M^3 \to \mathcal M^2$, which topologically are trefoil knots (as well as  $\mathcal K$, which is the fiber over infinity). In the Lorenz system they correspond to periodic orbits in the limit $r=\infty.$

This means that for large values of $\mathcal C$, the projections of the corresponding Liouville tori are embeddings and knotted as trefoil knots. The frequencies of the motion are
$$
\omega_1=\frac{\beta-\alpha}{2\beta}|b-c|, \quad \omega_2=\sqrt{-\Delta}, \,\, 
$$
where $\omega_1$ is the frequency along the trefoil fibre, while $\omega_2$ is the frequency along the hyperbolic circle. 

In the particular case when the ratio
 \begin{equation}
\label{pq}
\frac{\omega_1}{\omega_2}=\frac{\beta-\alpha}{2\beta}\frac{|b-c|}{\sqrt{-\Delta}}=\frac{p}{q}, \quad p,q \in \mathbb Z,
\end{equation}
is rational, the trajectories are periodic and form a special case of satellite knots of trefoil, called {\it cable knots} \cite{Adams}. They can be realised as torus knots $K_{p,q}$ on the solid torus knotted as a trefoil knot (see Fig. 4). 

Note that by $a,b,c$ as well as parameters of the metric $\alpha, \beta$ we can realise any values of $p,q,$ so any trefoil cable knot can be realised in this way.

 \begin{thm} The periodic geodesics on modular 3-fold $M_\Gamma^3$ with sufficiently large values of $\mathcal C$ fill 2-dimensional Liouville tori with frequencies satisfying (\ref{pq}). They represent trefoil cable knots in $S^3\setminus \mathcal K$ with parameters $p,q$, having linking number $l=6p$  with  $\mathcal K$.
 
 Any cable knot of trefoil can be realised in such a way. 
 \end{thm}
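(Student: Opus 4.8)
The plan is to read off the invariant tori from the explicit integration of the flow and then to match them, for large $\mathcal C$, with the Seifert fibration of $\mathcal M^3\cong S^3\setminus\mathcal K$. First I would use formula (\ref{geod2}): for $\Delta<0$ a geodesic through $g_0$ is $g(t)=g_0\,e^{tX_0}e^{tY_0}$, where $e^{tX_0}$ is an elliptic one-parameter subgroup of frequency $\omega_2=\sqrt{-\Delta}$ whose projection to $\mathcal M^2$ is the hyperbolic circle of Theorem 4.2, while $e^{tY_0}\in SO(2)$ is a pure fibre rotation of frequency $\omega_1=\tfrac{\beta-\alpha}{2\beta}|b-c|$. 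Hence the joint level set of the integrals $H,\Delta,F$ is the $2$-torus $\{g_0\,e^{sX_0}e^{uY_0}\}$, the flow on it is linear of slope $(\omega_2,\omega_1)$, it is periodic precisely when $\omega_1/\omega_2=p/q\in\mathbb Q$, and the closed orbit then wraps the base ($e^{sX_0}$) cycle $q$ times and the fibre ($e^{uY_0}$) cycle $p$ times.

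Next I would treat the limit $\mathcal C\to\infty$, i.e. $a,\,b+c\to0$. There $X_0,Y_0\in so(2)$ commute, $g(t)=g_0e^{t(X_0+Y_0)}$ is a single regular fibre of $\mathcal M^3\to\mathcal M^2$, and by the discussion around (\ref{quil}) this fibre is a trefoil $\mathcal K_0$ isotopic to $\mathcal K$. For large but finite $\mathcal C$ the projected hyperbolic circle on $\mathcal M^2$ is small, bounds an embedded disc $D$ disjoint from the two orbifold points, and the invariant torus projects into the solid torus $N(\mathcal K_0)=D\times S^1$. The role of the hypothesis ``sufficiently large $\mathcal C$'' is exactly to guarantee that $D$ stays inside one regular-fibre chart and that the projection of the torus to $\mathcal M^3$ is injective, so the torus embeds onto a surface isotopic to $\partial N(\mathcal K_0)$. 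On this torus the closed orbit is the $(p,q)$-curve whose companion core is the trefoil $\mathcal K_0$; by definition (see the text preceding the statement) this is the $(p,q)$-cable of the trefoil.

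It then remains to compute the linking number and to establish realisability. For the former I would pass to $H_1(S^3\setminus\mathcal K)\cong\mathbb Z$, in which the class of a loop equals its linking number with $\mathcal K$. A regular fibre represents the centre of $\pi_1=B_3$, whose image in $H_1=\mathbb Z$ is $6$; equivalently two regular fibres are parallel $(2,3)$-torus knots and link $2\cdot3=6$ times, so $\mathrm{lk}(\text{fibre},\mathcal K)=6$, while a meridian of $N(\mathcal K_0)$ bounds a disc missing $\mathcal K$ and contributes $0$. Expressing $[\gamma]$ in the fibre framing and reading off the longitudinal coefficient gives $\mathrm{lk}(\gamma,\mathcal K)=6p$. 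For realisability I would note that the quantity in (\ref{pq}) is a product of the metric factor $\tfrac{\beta-\alpha}{2\beta}$ and the dynamical factor $|b-c|/\sqrt{-\Delta}$, and that as $a,b,c,\alpha,\beta$ range over the admissible values (with $\mathcal C$ kept large) this product attains every rational value realised by a cable of the trefoil; choosing the parameters accordingly produces the prescribed knot.

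The hard part will be the topological bookkeeping of the last two paragraphs. Showing that the projected torus is genuinely embedded and unknotted as $\partial N(\mathcal K_0)$, rather than immersed or re-knotted, requires quantitative control of the tube radius against the injectivity radius of the fibration near $\mathcal K_0$, which is what forces the size threshold on $\mathcal C$. More delicate still is fixing the framing so that the two dynamical cycles are correctly identified with the meridian and the fibre longitude: going once around the small hyperbolic circle the frame picks up a nonzero fibre holonomy (computable from the $\varphi$-coordinate in (\ref{UH}), or from Gauss--Bonnet on $D$), and this holonomy must be absorbed into the definition of the fibre framing before the clean count $\mathrm{lk}=6p$ emerges. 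Once the framing is pinned down, the identification of $\gamma$ with the standard $(p,q)$-cable and the linking formula follow from the explicit formulae together with the classification of cable knots.
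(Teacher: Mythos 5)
Your proposal is correct and follows essentially the same route as the paper: the explicit factorisation $g(t)=g_0e^{tX_0}e^{tY_0}$ giving Liouville tori with frequencies $\omega_1$ (fibre) and $\omega_2$ (hyperbolic circle), the limit $\mathcal C\to\infty$ identifying the tori with boundaries of tubes around regular Seifert fibres (trefoils), rationality of $\omega_1/\omega_2$ producing the $(p,q)$-cables, and realisability by varying $a,b,c,\alpha,\beta$. Your homological computation of the linking number $l=6p$ (regular fibre representing $6\in H_1(S^3\setminus\mathcal K)\cong\mathbb Z$, meridian representing $0$) correctly supplies a detail that the paper asserts without proof, so if anything your write-up is more complete than the original argument.
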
 
 
 Thus in the integrable limit the class of modular knots is replaced by the class of the trefoil cable knots. In the Lorenz systems this change is shown on Fig. 5.
 
 It is instructive to see also what happens in the case when $$\Gamma=\Gamma_2/\{\pm I\}\subset PSL(2,\mathbb Z),$$  where $\Gamma_2 \subset SL(2,\mathbb Z)$ is the principal congruence subgroup, consisting of the matrices congruent to the identity modulo 2. The quotient surface $\mathcal M^2_{\Gamma_2}$ in this case is the sphere punctured at three points with the fundamental domain being the ideal triangle.
 The corresponding 3-fold can be represented topologically as the complement $$\mathcal M^3_{\Gamma_2}\cong S^3\setminus \mathcal L,$$ where $\mathcal L$ is the link of 3 Hopf fibres with pairwise linking numbers 1 shown on Fig. 7 (not to be confused with the famous Borromeo link with the pairwise linking numbers being zero).
 Note that in this case such a representation is not unique, since the topology of the complement to a link does not determine the link, in contrast to the knot case \cite{GL}.

The periodic geodesics for large $\mathcal C$ are cable knots of the Hopf fibres, which are simply torus knots in this case.
 \begin{figure}[h]
\includegraphics[width=60mm]{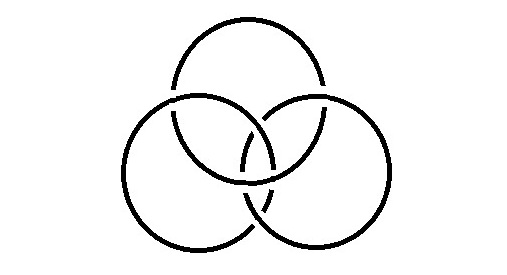}  
\caption{The Hopf 3-link $\mathcal L$}
\end{figure}

Recall that a knot $K \subset S^3$ is called {\it hyperbolic}, if its complement admits hyperbolic structure (complete constant negative curvature metric). Thurston \cite{Th2} proved that the only knots which are not hyperbolic are torus and satellite knots. 

Complements to satellite knots do not admit any geometric structure in Thurston's sense \cite{Th2, Cooper}, while for torus knots they admit $SL(2,\mathbb R)$-geometry. To see this we simply replace the modular surface with two orbifold points of order $2$ and $3$ by the hyperbolic surface $\mathcal M^2_{p,q}$ with one cusp and two orbifold points of order $p$ and $q.$ 

Not all modular knots are hyperbolic (although many of them are), but their links with the trefoil knot $\mathcal K$ are always hyperbolic, as it was shown by Foulon and Hasselblat \cite{FH}. The volumes of the corresponding hyperbolic complements of modular knots in $\mathcal M^3$ are studied in recent papers \cite{Bergeron, Brandts}. 

%
%
 
 \section{Topological point of view}
 
Let's discuss our problem from the topological point of view. The existence of a metric on a topological manifold $M^n$ with an integrable geodesic flow imposes some restriction on the topology of $M^n.$

The first such restrictions were found by Kozlov, who proved that two-dimensional closed oriented manifold $\mathcal M^2_g$ of genus $g > 1$ does not admit such metric with an analytic additional integral \cite{Kozlov}.

The multi-dimensional case was studied by Taimanov \cite{T}, who proved that if 
$M^n$ admits a geodesic flow which is Liouville integrable in real analytic sense, then
$\dim H_1(M^n, \mathbb R))\le n$ and the fundamental group  $\pi_1(M^n)$ is almost commutative (i.e. contains a commutative subgroup of finite index).  

In the case of the unit tangent bundle $\mathcal M^3=S\mathcal M^2_g$ of a genus $g$ surface $\mathcal M^2_g$
both Taimanov's conditions fail. Indeed, the fundamental group $\pi_1(\mathcal M^3)$  is generated by $a_1,\dots, a_g, b_1,\dots,b_g, \gamma$ with the relations
$$
a_i\gamma=\gamma a_i, b_i\gamma=\gamma b_i \,\, (i=1,\dots,g), \,\,\, \gamma^{2-2g}=(a_1b_1a_1^{-1}b_1^{-1})\dots (a_gb_ga_g^{-1}b_g^{-1})
$$
(see e.g. Chapter 1, Section 4 in \cite{DFN}). This implies that $\pi_1(\mathcal M^3)$ is not almost commutative with $\dim H_1(\mathcal M^3,\mathbb R)=2g>3=\dim \mathcal M^3,$ so by Taimanov's theorem there are no analytically integrable geodesic flows on $\mathcal M^3$, which agrees with our result.

This also agrees with Dinaburg's theorem \cite{Dinaburg}, claiming that if the fundamental group $\pi_1(M^n)$ of
a manifold $M^n$ has an exponential growth, then the topological entropy of
the geodesic flow of any Riemannian metric on $M^n$ is positive (see also Manning \cite{Manning}).

We should also mention an important result of Butler \cite{Butler2005}  which states that if a 3-fold $M^3$ admits an integrable geodesic flow 
with smooth (not necessarily analytic) integrals and a ``tame'' singular set,  then the fundamental group $\pi_1(M^3)$ contains a finite--index polycyclic subgroup of step length at most 4. This means that a closed $SL(2,\mathbb R)$-manifold does not admit integrable geodesic flows of this type even in smooth category.
For a review of other results on topological obstructions to integrability we refer to \cite{B}.

Note that the first examples of Liouville integrable geodesic flows with smooth integrals with positive topological entropy found by Bolsinov and Taimanov \cite{BT} are $Sol$-manifolds, which are topologically the torus bundles over circles, twisted by hyperbolic elements $A \in SL(2, \mathbb Z).$ It is interesting that they were also historically the first examples of 3-folds considered by Poincar\'e in 1892, see \cite{Stillwell}.

The most interesting question is to describe which type of knots may appear as periodic geodesics on the modular 3-fold for all values of the integral $\mathcal C.$ This question is most likely to be out of reach, but even to find possible topological restrictions would be very interesting. 

A similar question can be asked for the Lorenz system with arbitrary values of $r.$ Note that for large $r$ (and at least for $b$ not close to $\frac{3}{2}\sigma-\frac{1}{2}$) the system has at most 3 periodic orbits, each representing a trivial knot \cite{MN}. 

It is interesting that the trefoil cable knots (and more generally, iterated torus knots) appearing in the integrable region have also very interesting relation with the theory of double affine Hecke algebras, see the recent papers by Berest and Samuelson \cite{BS, Samuelson} and Cherednik and Danilenko \cite{CD}.

Note also that iterated torus knots are precisely the knots with zero topological entropy, understood as the minimal topological entropy of diffeomorphisms of a disc, whose mapping torus respects the knot (see Llibre and MacKay \cite{LM}).

\section{Acknowledgements}

We are very grateful to Dmitri Alekseevsky, Yuri Berest, Ivan Cherednik, Ivan Dynnikov, Anatoly Neishtadt, Graeme Segal, Caroline Series and John Parker for very helpful and stimulating discussions. 

We are also grateful to Jos Leys for kind permission to use his spectacular images of modular knots.

A.\,Bolsinov was supported by the Russian Science Foundation grant no. 17-11-01303.

\end{document}